\newcommand{\hkra}{\hookrightarrow}
\newcommand{\ra}{\rightarrow}
\newcommand{\rimp}{\Rightarrow}
\newcommand{\set}[1]{\left\{ #1 \right\}}
\newcommand{\cc}[1]{\overline{#1}}
\newcommand{\union}[2]{\bigcup\limits_{#1}{#2}}
\newcommand{\sub}{\subseteq}
\newcommand{\sm}{\ensuremath{\setminus}}
\newcommand{\s}[2]{\sum\limits_{#1}{#2} }
\newcommand{\inv}{^{-1}}
\newcommand{\RE}{\mathbb{R}}
\newcommand{\bC}{\mathbb{C}}
\newcommand{\bH}{\mathbb{H}}
\newcommand{\bN}{\mathbb{N}}
\newcommand{\bR}{\mathbb{R}}
\newcommand{\bS}{\mathbb{S}}
\newcommand{\bZ}{\mathbb{Z}}
\newcommand{\cA}{\mathcal{A}}
\newcommand{\cB}{\mathcal{B}}
\newcommand{\cD}{\mathcal{D}}
\newcommand{\cE}{\mathcal{E}}
\newcommand{\cF}{\mathcal{F}}
\newcommand{\cH}{\mathcal{H}}
\newcommand{\cM}{\mathcal{M}}
\newcommand{\cP}{\mathcal{P}}
\newcommand{\cW}{\mathcal{W}}
\newcommand{\cY}{\mathcal{Y}}
\newcommand{\cl}{\colon}
\newcommand{\lbr}[1]{\Bigl(#1\Bigr)}
\newcommand{\emst}{\emptyset}
\newcommand{\xra}{\xrightarrow}
\newcommand{\pd}{\partial}
\newcommand{\eva}{\normalfont\text{ev}}
\newcommand{\chc}{_{*}}
\newcommand{\hpd}{\cc{\partial}}
\newcommand{\dsm}{\oplus}
\newcommand{\ucc}{^{*}}
\newcommand{\Tho}{\normalfont\text{Th}}
\newcommand{\Indv}{\normalfont\text{Ind}}
\newcommand{\smap}{\wedge}
\newtheorem{de}{Definition}[section]
\newtheorem{thm}[de]{Theorem}
\newtheorem{prop}[de]{Proposition}
\newtheorem{lem}[de]{Lemma}
\newtheorem{cor}[de]{Corollary}
\newtheorem{assumption}[de]{Assumption}
\newtheorem{rem}[de]{Remark}
\newtheorem{conjecture}[de]{Conjecture}
\numberwithin{equation}{subsection}
\newcommand{\Addresses}{{
  \bigskip
  \footnotesize

  \textsc{Institut de Mathématiques de Jussieu, Sorbonne Université, 4 place Jussieu, 75005 Paris, France }\par\nopagebreak
  \textit{E-mail address}: \texttt{hirschi@imj-prg.fr}
  
\smallskip
  
    \textsc{Imperial College London, South Kensington Campus, London SW7 2AZ, UK}\par\nopagebreak
    \textit{E-mail address}: \texttt{n.porcelli@imperial.ac.uk}
}}
\DeclareMathOperator{\im}{im}
\DeclareMathOperator{\rank}{rank}
\DeclareMathOperator{\ide}{id}
\DeclareMathOperator{\pr}{pr}
\DeclareMathOperator{\fixp}{Fix}
\DeclareMathOperator{\reg}{reg}
\DeclareMathOperator{\indo}{index}
\DeclareMathOperator{\Sp}{Sp}
\begin{document}
\title[Lagrangian intersections and the cuplength]{Lagrangian intersections and cuplength in generalised cohomology theories}
\author{Amanda Hirschi, Noah Porcelli}

\begin{abstract}We find lower bounds on the number of intersection points between two relatively exact Hamiltonian isotopic Lagrangians. The bounds are given in terms of the cuplength of the Lagrangian in various multiplicative generalised cohomology theories. The intersection of the Lagrangians need not be transverse, however, we require certain orientation assumptions.
    This gives stronger bounds than previous estimates on the number of intersection points for suitable Lagrangians diffeomorphic to $\Sp(2)$ or $\Sp(3)$.
    Our proof uses Lusternik-Schnirelmann theory, following and extending work by Hofer \cite{Ho88}.\end{abstract}

\maketitle
\tableofcontents

 \section{Introduction}
 
 \subsection{Background}
 
    Let $(X,\omega)$ be a symplectic manifold, and let $L$ and $L'$ be two Lagrangian submanifolds in $X$. A classical problem in symplectic topology is to find lower bounds on the number of intersection points between $L$ and $L'$ when they are related by a Hamiltonian diffeomorphism.
    
    This has been studied by many authors, under various different assumptions. When the Lagrangians are assumed to be transverse, there are lower bounds obtained from Floer homology, such as in \cite{Fl88}, \cite{Oh93}, and \cite{FOOO11}, among many other references.\par
    The classical Arnol'd conjecture concerns a special case of this question, where $(X, \omega)$ is the product symplectic manifold $(Y \times Y, \sigma \oplus -\sigma)$ for  some compact symplectic manifold $(Y, \sigma)$, $L$ is the diagonal and $L'$ is the graph of a Hamiltonian diffeomorphism of $Y$. This case has been the subject of much study, both with and without the additional assumption of transverse intersection. See \cite{FO99,Rud99,P16, AB21,Re22,BX22} and the references therein.\par
 
    We will not assume that $L$ and $L'$ are transverse, but instead make the following assumption. It excludes disc and sphere bubbling, guaranteeing the compactness of certain moduli spaces of pseudoholomorphic curves.
    
    \begin{assumption}\label{relativeexactness}Throughout this paper, we will assume that
        \begin{enumerate}
            \item $X$ is either closed or a Liouville manifold.
            \item $L$ is connected, closed, and \emph{relatively exact}, i.e.,
                $$\omega \cdot \pi_2(X, L) = 0.$$
        \end{enumerate}
    \end{assumption}
    Under these assumptions, Floer proved 
    \begin{thm}[\cite{Fl88}]
        If $L$ and $L'$ intersect transversely, there is a lower bound
        $$\#L \cap L' \geq \sum\limits_i \mathrm{Rank}(H_i(L; \bZ/2)).$$
    \end{thm}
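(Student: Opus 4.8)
The plan is to prove Floer's theorem by constructing the $\bZ/2$-coefficient Lagrangian Floer homology $HF_*(L,L')$ of the pair, bounding $\#(L\cap L')$ below by its total dimension, and identifying it with $H_*(L;\bZ/2)$. \textbf{Step 1 (the Floer complex).} Fix a generic $\omega$-compatible family $(J_t)_{t\in[0,1]}$ of almost complex structures. Since $L$ and $L'$ intersect transversely, $L\cap L'$ is finite; let $CF_*(L,L';\bZ/2)$ be the $\bZ/2$-vector space on these intersection points, graded (mod $2$, say) by the Maslov--Viterbo index. Define $\partial$ by counting, modulo the $\bR$-translation action, isolated finite-energy $J$-holomorphic strips $u\colon\bR\times[0,1]\to X$ with $u(\bR\times\{0\})\subset L$, $u(\bR\times\{1\})\subset L'$, asymptotic to intersection points as $s\to\pm\infty$.

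\textbf{Step 2 (compactness, no bubbling, $\partial\circ\partial=0$).} This is where Assumption~\ref{relativeexactness} is essential. Relative exactness $\omega\cdot\pi_2(X,L)=0$ (which also holds for $L'$, being Hamiltonian isotopic to $L$) bounds the energy of any such strip by the action difference of its endpoints, and, together with $X$ being closed or Liouville, excludes bubbling of $J$-holomorphic discs with boundary on $L$ or $L'$ and of $J$-holomorphic spheres. Hence the moduli spaces of strips of index $\le 2$ are compact up to breaking at intersection points; transversality for generic $J$ and a standard gluing argument then yield $\partial\circ\partial=0$. Set $HF_*(L,L'):=H_*\big(CF_*(L,L'),\partial\big)$.

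\textbf{Step 3 (inequality and computation).} As a $\bZ/2$-vector space $HF_*(L,L')$ is a subquotient of $CF_*(L,L')$, so
$$\#(L\cap L')=\dim_{\bZ/2}CF_*(L,L')\ \ge\ \dim_{\bZ/2}HF_*(L,L').$$
A continuation-map argument shows $HF_*(L,L')$ is invariant under Hamiltonian isotopy of $L'$, so we may take $L'=\phi_H^1(L)$ with $H$ generated by a $C^2$-small Morse function $f$ on $L$, transported into $X$ via a Weinstein neighbourhood $T^*L\supset U\xrightarrow{\ \cong\ }V\subset X$. Then $L\cap L'$ is identified with $\Crit(f)$, and for $\|f\|_{C^2}$ small Floer's strip-shrinking argument (comparing the strip equation to the negative gradient flow ODE of $f$) identifies isolated Floer strips with isolated gradient flowlines, giving a chain isomorphism $CF_*(L,L')\cong CM_*(f;\bZ/2)$ and hence $HF_*(L,L')\cong H_*(L;\bZ/2)$. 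Combining with the inequality above,
$$\#(L\cap L')\ \ge\ \dim_{\bZ/2}H_*(L;\bZ/2)=\sum_i\mathrm{Rank}\,H_i(L;\bZ/2).$$

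\textbf{Main obstacle.} The technical heart is the analytic package behind Steps 1--2: Gromov compactness for strips with Lagrangian boundary conditions, transversality of the relevant moduli spaces, and the gluing needed for $\partial\circ\partial=0$ — all resting on Assumption~\ref{relativeexactness} to rule out disc and sphere bubbling and on the $X$ closed/Liouville hypothesis to prevent escape of energy to infinity. The Floer--Morse comparison used in Step 3 is also delicate, but is by now standard.
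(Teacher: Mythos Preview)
The paper does not prove this statement: it is quoted as Floer's theorem with a citation to \cite{Fl88} and serves purely as background for the non-transverse case treated later. There is therefore no ``paper's own proof'' to compare against.

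That said, your sketch is correct and is exactly Floer's original argument: build the $\bZ/2$-Floer complex on $L\cap L'$, use relative exactness (Assumption~\ref{relativeexactness}) to rule out disc and sphere bubbling so that Gromov compactness and gluing give $\partial^2=0$, invoke continuation maps for Hamiltonian invariance, and compute $HF_*(L,L')\cong H_*(L;\bZ/2)$ by taking $L'$ to be a $C^2$-small Morse perturbation of $L$ in a Weinstein neighbourhood. The one point worth flagging is that your Step~3 identification of Floer strips with Morse trajectories is the genuinely delicate part (Floer's adiabatic/strip-shrinking analysis), and you have correctly acknowledged this as standard but nontrivial.
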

    Without the transversality assumption, a version of the Arnol'd conjecture states that
    \begin{conjecture}\label{c}
        Given Assumption \ref{relativeexactness}, there is a lower bound
        $$ \#L \cap L' \geq \mathrm{Crit}(L) $$
        where $\mathrm{Crit}(L)$ is the minimal number of critical points of any smooth map $L \rightarrow \bR$.
    \end{conjecture}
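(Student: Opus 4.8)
The plan is to adapt the Lusternik--Schnirelmann (LS) machinery of Hofer \cite{Ho88} to the degenerate setting, running a minimax argument directly on the symplectic action functional rather than counting via Morse--Floer homology. Fix a Hamiltonian $H$ with $\phi^1_H(L)=L'$ and let $\Omega=\Omega(L,L')$ be the space of contractible paths $\gamma\colon[0,1]\to X$ with $\gamma(0)\in L$ and $\gamma(1)\in L'$. By Assumption \ref{relativeexactness}, $\omega\cdot\pi_2(X,L)=0$, so the action functional $\mathcal{A}_H\colon\Omega\to\bR$ is single-valued on $\Omega$, and its critical points correspond exactly to the points of $L\cap L'$, with \emph{no} transversality hypothesis. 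Choosing a generic $\omega$-compatible $J$, the negative gradient trajectories of $\mathcal{A}_H$ are $J$-holomorphic strips with boundary on $L\cup L'$.

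The first real input is a deformation lemma for the sublevel sets $\{\mathcal{A}_H\le a\}$. Relative exactness forbids disc and sphere bubbling, so Gromov compactness makes the moduli spaces of finite-energy gradient trajectories between sublevel sets compact up to breaking; this is the Floer-theoretic substitute for the Palais--Smale condition, and it lets one push $\{\mathcal{A}_H\le a\}$ down past any interval of regular values. Following Hofer, one then attaches to $\mathcal{A}_H$ a family of minimax values indexed by a filtration of the topology of $\Omega$ (cohomology classes and, more generally, categorical weights): the deformation lemma forces each such value to be a critical value, and a coincidence of consecutive values to produce an entire critical submanifold, so that $\#(L\cap L')$ is bounded below by a topological invariant of $\Omega$. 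Finally, a continuation argument in $H$ --- again powered by the bubbling-free compactness --- shows this invariant is independent of $H$; evaluating at the trivial end $H=0$, $L'=L$, where the critical set is the constant paths and $\Omega$ deformation retracts onto $L$, identifies it with the LS-type invariant of $L$ that computes $\Crit(L)$.

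I expect the main obstacle to be upgrading the LS estimate from the cuplength bound --- which is essentially all Hofer's method directly yields, namely $\#(L\cap L')\ge \mathrm{cuplength}(L)+1$ --- to the full $\Crit(L)$. This needs either a finite-dimensional reduction of $\mathcal{A}_H$ near its critical locus to an honest smooth function on a manifold homotopy equivalent to $L$, after which the bound $\Crit(L)$ is immediate by definition, or an extension of abstract LS theory robust enough to operate with only the Floer-type compactness above, even when $L\cap L'$ is a single positive-dimensional and highly degenerate submanifold. A secondary technical point, in the Liouville case, is a maximum principle / $C^0$-confinement keeping all trajectories in a fixed compact subset so that the compactness statements genuinely apply.
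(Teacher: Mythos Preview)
The statement you are trying to prove is labelled as a \emph{Conjecture} in the paper, not a theorem: the paper does not prove it and does not claim to. What the paper actually establishes is the weaker cuplength bound $\#L\cap L'\geq c_R(L)$ for various ring spectra $R$ (Theorem \ref{Intersection Points and Cuplength}), and it explicitly notes that this is in general strictly weaker than the conjectural bound $\#L\cap L'\geq \Crit(L)$, citing \cite[Example 3.7]{Rud99}. So there is no ``paper's own proof'' to compare your proposal against.

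Your proposal is honest about exactly this: you correctly identify that Hofer's Lusternik--Schnirelmann machinery only yields the cuplength (or at best the LS category) of $L$, and you flag the step from there to $\Crit(L)$ as ``the main obstacle''. But you do not resolve it. The two routes you sketch --- a finite-dimensional reduction of $\mathcal{A}_H$ to a smooth function on something homotopy equivalent to $L$, or an abstract LS theory strong enough to see $\Crit(L)$ rather than $\lscat(L)$ --- are precisely the missing ingredients, and neither is known in the generality of Assumption \ref{relativeexactness}. The finite-dimensional reduction is available for cotangent bundles (generating functions, as in Laudenbach--Sikorav \cite{LS85}), and the diagonal case is handled by Rudyak and others, but for a general relatively exact $L$ in a general $X$ this remains open. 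In short, your plan reproduces the known cuplength estimate and correctly locates the gap, but does not close it; as written it is a proof outline for Theorem \ref{hof}, not for Conjecture \ref{c}.
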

    A standard application of the Weinstein neighbourhood theorem implies that if true, this bound must be sharp.\\

   Lusternik-Schnirelmann theory is a powerful tool for studying (numbers of) critical points or intersection points without any transversality assumptions, in contrast to Morse theory. It has been used in many fields other than symplectic geometry. For example, Klingenberg proved in \cite[Theorem 5.1.1]{Kl78} that any metric on $S^2$ admits at least three closed geodesics. 
   Lusternik-Schnirelmann theory has also been used in contact topology, e.g., by Ginzburg and G\"urel in \cite{GiGu20} to find lower bounds for numbers of Reeb orbits. For a more comprehensive discussion and further applications we refer to \cite{CLOT03} or Chapter 11 in \cite{MS17}. \par
   In particular, this technique can be and has been applied to study Conjecture \ref{c}.
    
    \begin{thm}[{\cite[Theorem 3]{Ho88}, \cite[Theorem 1]{Fl89}, \cite[Theorem 1]{Ho85}}]\label{hof}
        Under Assumption \ref{relativeexactness}, there is a lower bound
        \begin{equation}\label{int-mod-2}\#L \cap L' \geq c_{\bZ/2}(L).\end{equation}
        If $X = T^* L$ is a cotangent bundle with $L$ the zero section, there is a lower bound
        \begin{equation}\label{int-mod-0}\#L \cap L' \geq c_{\bZ}(L).\end{equation}
    \end{thm}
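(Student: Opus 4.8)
The plan is to recast the intersection problem as a Lusternik--Schnirelmann (LS) problem for the symplectic action functional: the role of Assumption \ref{relativeexactness} is precisely to supply the compactness needed to run the LS deformation arguments in this non-transverse, infinite-dimensional setting, following Hofer.

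\emph{The variational problem.} Write $L' = \phi^1_H(L)$ for a Hamiltonian isotopy $(\phi^t_H)_{t}$ and let $\mathcal{A}_H$ be the symplectic action functional on the relevant component $\mathcal{P}$ of the space of paths from $L$ to $L'$. Relative exactness, $\omega\cdot\pi_2(X,L)=0$, makes $\mathcal{A}_H$ single-valued, and its critical points are the Hamiltonian chords of $H$ with endpoints on $L$, which inject into $L\cap L'$ via the endpoint map. If $L\cap L'$ is infinite the theorem is vacuous, so we may assume it is finite; then $\Crit(\mathcal{A}_H)$ is a finite set of isolated, possibly degenerate, points. Deforming $H$ through Hamiltonians to a $C^2$-small one---for which $L'$ is the graph of $df$ in a Weinstein neighbourhood $T^*L$ and $\mathcal{A}_H$ restricts to $f$---and using relative exactness to see that the induced continuation maps are isomorphisms on the relevant (co)homology, the filtered homotopy type of the super-level sets $\mathcal{P}^{\ge a}$ is that of $L$; in particular $H^*(\mathcal{P};R)\cong H^*(L;R)$ for any coefficient ring $R$, and the critical values of $\mathcal{A}_H$ are finite in number.

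\emph{Deformation lemmas (the technical core).} The negative $L^2$-gradient trajectories of $\mathcal{A}_H$ with respect to a metric from an $\omega$-compatible almost complex structure are solutions of Floer's equation on the strip; by the energy identity bounded-energy trajectories have image in a fixed compact set, and Assumption \ref{relativeexactness} excludes disc and sphere bubbling, so Gromov--Floer compactness applies. From this one constructs a pseudo-gradient flow on $\mathcal{P}$ for which (i) if $[a,b]$ contains no critical value then $\mathcal{P}^{\ge a}$ deformation retracts onto $\mathcal{P}^{\ge b}$, and (ii) for an isolated critical value $c$ with critical set $K_c$ and any open $U\supseteq K_c$, $\mathcal{P}^{\ge c-\varepsilon}$ deformation retracts into $\mathcal{P}^{\ge c+\varepsilon}\cup U$ for small $\varepsilon>0$. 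Extracting genuine deformation retractions from moduli of Floer strips---since $\mathcal{A}_H$ is not defined on a Hilbert manifold and fails Palais--Smale in the naive sense---is where the real work lies, and this is the one place where relative exactness is indispensable.

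\emph{The cup-length count.} Take $R=\bZ/2$ and let $k$ be the largest integer for which there exist $a_1,\dots,a_k\in\widetilde H^{>0}(L;\bZ/2)$ with $a_1\cup\cdots\cup a_k\neq 0$, so that $c_{\bZ/2}(L)=k+1$; transport the $a_i$ to $\mathcal{P}$. With $\beta_0=1$, $\beta_j=a_1\cup\cdots\cup a_j$, and $c_j=\sup\{c\in\bR:\ \beta_j|_{\mathcal{P}^{\ge c}}\neq 0\}$, one has $c_0\ge c_1\ge\cdots\ge c_k$, each $c_j$ is finite and is a critical value of $\mathcal{A}_H$ by (i), and the standard Mayer--Vietoris argument using (ii) together with $a_{j+1}|_U=0$ whenever $U\hookrightarrow\mathcal{P}$ is null-homotopic shows that $c_j=c_{j+1}$ forces $\operatorname{cat}_{\mathcal{P}}(K_{c_j})\ge 2$. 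Since the critical points are isolated, $\operatorname{cat}_{\mathcal{P}}(K_c)=\#K_c$, so summing the multiplicities of the distinct values among $c_0,\dots,c_k$ yields $\#\Crit(\mathcal{A}_H)\ge k+1=c_{\bZ/2}(L)$, and hence $\#L\cap L'\ge c_{\bZ/2}(L)$. When $X=T^*L$ with $L$ the zero section we have $\pi_2(X,L)=0$, the determinant lines of the linearised operators carry a canonical trivialisation, so the entire construction is valid over $R=\bZ$ and the same count gives $\#L\cap L'\ge c_{\bZ}(L)$; equivalently, here $L'$ admits a generating function quadratic at infinity to which finite-dimensional LS theory applies over $\bZ$. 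Some final care is needed to pin down the correct component $\mathcal{P}$---so that $\#\Crit(\mathcal{A}_H)$ genuinely bounds $\#L\cap L'$ from below regardless of the size of the isotopy---and to handle degenerate isolated critical points purely via $\operatorname{cat}$ and the cup product, Morse theory being unavailable.
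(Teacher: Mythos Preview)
Your outline diverges from the paper's approach in a way that hides the central difficulty. You run Lusternik--Schnirelmann theory on the path space $\mathcal{P}$ with the action functional $\mathcal{A}_H$, and the whole argument rests on the deformation lemmas (i) and (ii). But the negative $L^2$-gradient of $\mathcal{A}_H$ is the Floer equation, a PDE on the strip, and it does \emph{not} integrate to a flow (or even a pseudo-gradient semiflow) on $\mathcal{P}$: solutions need not exist for all forward time from an arbitrary path, and the resulting ``flow map'' is not continuous on $\mathcal{P}$ in any useful topology. So the sentence ``from this one constructs a pseudo-gradient flow on $\mathcal{P}$'' is exactly where the argument breaks; you flag it as ``where the real work lies'' but do not indicate any mechanism for producing the retractions. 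Relatedly, the assertion $H^*(\mathcal{P};R)\cong H^*(L;R)$ is not what continuation maps give you: continuation is an isomorphism of \emph{Floer} cohomologies, not of singular cohomologies of path spaces, and the free Lagrangian path space is in general not homotopy equivalent to $L$.

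The paper's route (which is Hofer's in \cite{Ho88}) sidesteps this entirely by replacing $\mathcal{P}$ with the \emph{compact} moduli space $\cM_{L,L'}$ of finite-energy pseudoholomorphic strips. On $\cM_{L,L'}$ the $\bR$-action by domain translation is an honest continuous flow, the action-like function $\sigma$ is strictly decreasing along non-constant orbits, and the fixed points are precisely the constant strips, i.e.\ the points of $L\cap L'$. The analytic input is not a deformation retract of sublevel sets but the injectivity of the evaluation $\pi^*\colon R^*(L)\to R^*(\cM_{L,L'})$, established by approximating $\cM_{L,L'}$ with moduli of discs and a cobordism/index-bundle argument (Proposition~\ref{Injectivity of Evaluation Map with Spectra} and Theorem~\ref{Main Theorem 1}). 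Lusternik--Schnirelmann theory is then run on the compact space $\cM_{L,L'}$ via the index function $I$ of \S\ref{Section 4}, with Lemma~\ref{Index Function and Cuplength} playing the role of your cup-product step. Your generating-function remark for $T^*L$ is a valid alternative for \eqref{int-mod-0}, but for \eqref{int-mod-2} in general $X$ the path-space scheme as written does not go through.
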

    
    Here $c_{R}(L)$ is the cuplength of $L$ in cohomology with coefficients in $R$ for a ring $R$, defined below. It is a lower bound for the Lusternik-Schnirelmann category, a much less computable invariant given by the minimal cardinality of an open cover by nullhomotopic subsets. In particular, just as the rank of $H^*(X;\bZ)$ provides a lower bound for the number of critical points of a Morse function, the cuplength is a lower bound for the number of critical points of an arbitrary smooth function, \cite[Theorem 21]{Op14}. See \cite{AH16} and \cite{FL23} for further estimates involving the cuplength.
    
    The inequality \eqref{int-mod-2} was shown using Lusternik-Schnirelmann theory by Hofer. Independently, Floer's proof proceeds via Conley indices. In general, \eqref{int-mod-2} and \eqref{int-mod-0} are weaker bounds than the one given by Conjecture \ref{c}. There are examples in which it is a strictly weaker bound, such as \cite[Example 3.7]{Rud99}.\par   
    Similar results have been obtained in the monotone, rather than the relatively exact setting in which we work, by L\^{e}-Ono \cite{VaOn96} and Gong \cite{Go21b}.

\subsection{Main results}
    We extend Theorem \ref{hof}, as well as other results in \cite{Ho88}, to generalised cohomology theories. The strategy is to prove the injectivity of a certain pullback map relating the generalised cohomology of $L$ with the generalised cohomology of a certain moduli space of pseudoholomorphic curves.\par
    
    Fix a ring spectrum $R$, representing a multiplicative generalised cohomology theory $R^*$. The main invariant we will use is the cuplength.
    
    \begin{de} Let $Y$ be a path-connected topological space. The \emph{$R$-cuplength} of $Y$ is the natural number (or $\infty$)
    	\begin{equation*} c_{R}(Y) := \inf\{k \in \bN: \forall \alpha_1,\dots,\alpha_k \in \tilde{R}^*(Y) : \alpha_1 \cdot \dots\cdot \alpha_k = 0\},\end{equation*}
     where $\tilde{R}^*$ denotes the reduced cohomology theory. 
    \end{de}

We use Hofer's convention in \cite{Ho88} regarding the cuplength. It differs by one to other definitions in the literature, such as in \cite{IM04}. It is also called the index of nilpotency, \cite{J78}.
    
    The proof of Theorem \ref{hof} relies on various moduli spaces of pseudoholomorphic curves, which might not be cut out transversely.
    Thus, they do not admit an a tangent bundle; instead one has to work with a virtual vector bundle, induced by the linearisation of the Cauchy-Riemann operator. 
    We make the following assumption throughout the paper, which will allow us to apply a version of Atiyah duality later on.
    
    \begin{assumption}\label{Assumption}
    	This virtual vector bundle is $R$-orientable.
    \end{assumption}
    
    From \cite{Por22} we have the following criteria for Assumption \ref{Assumption} to be satisfied for some generalised cohomology theories.
    
    \begin{prop}[{\cite[Proposition 1.13]{Por22}}]\label{Orientability Holds Often}
    	Assumption \ref{Assumption} holds when
    	\begin{enumerate}
    	    \item $R$ is the Eilenberg-MacLane spectrum $H\bZ/2$.
    	    \item $R$ is the Eilenberg-MacLane spectrum $H\bZ$, and $L$ is (relatively) spin.
    	    \item $R^*$ is complex $K$-theory, and $L$ is spin.
            \item\label{sphere} $R^*$ is real $K$-theory, and $TL$ admits a stable trivialisation over a 3-skeleton of $L$ which extends (after complexification) to a stable trivialisation of $TX$ over a 2-skeleton of $X$ (as a complex vector bundle).
    	\end{enumerate}
    \end{prop}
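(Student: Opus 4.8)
This is \cite[Proposition 1.13]{Por22}, and the plan is as follows. Write $\cE \to \cM$ for one of the virtual index bundles at issue — the index of the family of linearised Cauchy--Riemann operators over a relevant (compactified) moduli space $\cM$ of (perturbed) pseudoholomorphic strips with boundary on $L$ and $L'$ occurring in Hofer's scheme; the argument will be uniform over these moduli spaces. Since $R$-orientability of $\cE$ depends only on its stable isomorphism class and is detected on skeleta, the first move is to reduce to controlling $w_1(\cE)$, $w_2(\cE)$ and $W_3(\cE) := \beta w_2(\cE)$ (with $\beta$ the integral Bockstein), via the standard dictionary: the obstruction to $R$-orientability is the composite $\cM \xrightarrow{\cE} BO \to BGL_1(\mathbb{S}) \to BGL_1(R)$, and $GL_1(H\bZ/2)$ is contractible (so every virtual bundle is $H\bZ/2$-orientable), $BGL_1(H\bZ) \simeq K(\bZ/2,1)$ detects $w_1$ (so $H\bZ$-orientability $\Leftrightarrow w_1 = 0$), $KU$-orientability $\Leftrightarrow \cE$ is $\Spin^c$ ($w_1 = 0$, $W_3 = 0$), and $KO$-orientability $\Leftrightarrow \cE$ is $\Spin$ ($w_1 = w_2 = 0$), the last two by the Atiyah--Bott--Shapiro orientations.

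The geometric core is a linear Riemann--Roch and gluing computation identifying $\cE$ up to stable isomorphism. The domain of a strip is contractible, so the interior part $u^*TX$ of the linearised operator is, canonically up to homotopy, pulled back from the value at one interior point, whereas the boundary part $(\partial u)^*TL$ genuinely records the boundary arc, i.e. a path in $L$. Propagating the additivity of the index under linear gluing through families, and coherently over the breaking strata of $\cM$, one obtains evaluation maps $q\colon\cM \to X$ and $\pi\colon\cM \to \cP L$ (to a space of paths in $L$) and a stable isomorphism
\[ \cE \oplus \underline{\bR}^N \;\cong\; q^*\overline{TX} \;\oplus\; \pi^*\widetilde{TL}, \]
where $\overline{TX}$ is the conjugate complex bundle (the interior contribution is $\bC$-linear) and $\widetilde{TL}$ is the transgressed bundle over $\cP L$ restricting to $TL$ on constant paths, so its degree-$d$ characteristic classes combine $w_d(TL)$ with the transgression of $w_{d+1}(TL)$. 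This is the step one should extract essentially verbatim from the linear analysis of \cite{Por22}, mirroring the closed-string computation of Abouzaid--Blumberg.

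The four cases then become characteristic-class bookkeeping, using two facts: $TX$ is complex, so $w_1(TX) = 0$ and $W_3(TX) = 0$ and the $\overline{TX}$-summand contributes only through $w_2(TX)$; and a strip provides a disc in $X$ bounding its $L$-boundary arc, so the transgression along $\pi$ of any class pulled back from $X$ via the inclusion $i\colon L \hookrightarrow X$ vanishes on $\cM$. Hence the obstructions of $\cE$ are governed by $w_1(TL)$ and by $w_2(TL)$ taken modulo $i^*H^2(X;\bZ/2)$ (plus $w_2(TX)$ in the interior). Case (1) needs nothing. For (2), a relative spin structure furnishes exactly a trivialisation of $w_1(TL)$ together with a class on $X$ restricting to $w_2(TL)$, killing the transgression term, so $w_1(\cE) = 0$. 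For (3), $L$ spin gives $w_1(TL) = w_2(TL) = 0$, hence $w_3(TL) = Sq^1 w_2(TL) = 0$ by Wu, so $w_1(\widetilde{TL}) = w_2(\widetilde{TL}) = 0$; then $w_1(\cE) = 0$ and $w_2(\cE) = q^*w_2(TX)$, whence $W_3(\cE) = q^*W_3(TX) = 0$ and $\cE$ is $\Spin^c$. For (4) one uses the hypotheses directly: the stable trivialisation of $TL$ over a $3$-skeleton of $L$ pulls back under $\pi$ — the extra dimension absorbed by the transgression, which lowers degree by one — to a stable trivialisation of $\pi^*\widetilde{TL}$ over a $2$-skeleton of $\cM$, and the complex stable trivialisation of $TX$ over a $2$-skeleton of $X$ pulls back under $q$ to one of $q^*\overline{TX}$; chosen compatibly over $L$ and combined, these trivialise $\cE$ over a $2$-skeleton of $\cM$, so $w_1(\cE) = w_2(\cE) = 0$ and $\cE$ is $\Spin$.

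The main obstacle is the displayed stable isomorphism: organising the linearised gluing in families with coherent trivialisations along broken strips, extending the identification over all boundary strata of the compactified moduli space, and tracking the one-step degree shift in the boundary (path-space) contribution precisely enough to see why (4) requires a $3$-skeleton of $L$ but only a $2$-skeleton of $X$. Everything downstream of that isomorphism is formal.
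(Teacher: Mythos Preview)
The paper does not prove this proposition at all: it is imported wholesale as a citation of \cite[Proposition 1.13]{Por22}, with no argument given. So there is nothing in the paper to compare your proposal against.

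Your sketch is a plausible reconstruction of how such a result is proved, and you correctly identify the division of labour: the reduction of $R$-orientability to characteristic-class obstructions via $BO \to BGL_1(R)$ is standard, and the substantive content is the stable identification of the index bundle of the family of linearised Cauchy--Riemann operators. You are also right that this identification is the only non-formal step and that everything downstream is bookkeeping. One caution: the precise shape of the displayed stable isomorphism (the $q^*\overline{TX} \oplus \pi^*\widetilde{TL}$ formula, and in particular the role of a path space in $L$ and the ``transgressed'' bundle $\widetilde{TL}$) is something you should verify against \cite{Por22} rather than assert --- the moduli spaces in this paper are discs with moving boundary, not strips with two fixed Lagrangians, and while the index-theoretic input is morally the same, the exact packaging of the boundary contribution may differ. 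In particular, your heuristic for why case (4) needs a $3$-skeleton of $L$ but only a $2$-skeleton of $X$ is the right intuition, but the actual mechanism in \cite{Por22} should be checked rather than inferred.
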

  
    Fix an $\omega$-compatible almost complex structure $J$. If $X$ is Liouville, we assume $J$ to be convex at infinity. Let $Z$ be the infinite strip $\bR + i[0, 1]$ in the complex plane. By pseudoholomorphic, we will always mean with respect to $J$. We are interested in the following standard moduli space of pseudoholomorphic strips with Lagrangian boundary conditions
    $$\cM_{L, L'} := \set{u \in C^\infty(\bR + i[0,1],X) : \hpd_J u = 0,\;E(u)<  \infty,\; u(\bR)\sub L,\; u(\bR+i)\sub L'}.$$
    Evaluation at 0 defines a map $\pi: \cM_{L, L'} \rightarrow L$. Following Hofer's strategy, we can approximate $\cM_{L, L'}$ with spaces of pseudoholomorphic discs and use Theorem \ref{Main Theorem 1} to prove the following.
    
    \begin{prop}\label{Main Theorem 2} The map $\pi^*\cl R^*(L)\ra R^*(\cM_{L,L'})$ is injective.
    \end{prop}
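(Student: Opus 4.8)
The plan is to deduce Proposition~\ref{Main Theorem 2} from Theorem~\ref{Main Theorem 1}, which supplies the analogous injectivity statement for an evaluation map out of a moduli space of pseudoholomorphic discs with boundary on $L$. The bridge is Hofer's idea of approximating $\cM_{L,L'}$ by such disc moduli spaces, carried out here so as to be compatible with the evaluation map $\pi$. Two features of Assumption~\ref{relativeexactness} are what make this legitimate: it rules out disc and sphere bubbling, so that Gromov compactness applies uniformly throughout the approximation and the $\omega$-energy of a strip is determined by its asymptotics (no energy escapes); and it is exactly what is needed so that, after capping off the two boundary punctures of a finite-energy strip by the removable-singularity theorem for Lagrangian boundary conditions, one obtains honest pseudoholomorphic discs with boundary on $L$. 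Assumption~\ref{Assumption} enters only through Theorem~\ref{Main Theorem 1} --- it is what makes the Atiyah-duality input available --- so one must additionally check that it is preserved along the approximation.

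Concretely, write $L' = \phi^1_H(L)$ and conjugate each strip $u \in \cM_{L,L'}$ by the Hamiltonian flow, setting $\tilde u(s,t) := (\phi^t_H)^{-1}(u(s,t))$. Then $\tilde u$ has \emph{both} boundary components on $L$, solves a Hamiltonian-perturbed Cauchy--Riemann equation on $Z$, and has the same value at the marked point $0 \in \partial Z$ as $u$; this identifies $\cM_{L,L'}$, over $L$ via $\pi$, with a moduli space of such perturbed strips. Following Hofer, one then degenerates this equation so as to relate it, compatibly with evaluation at $0$, to a moduli space $\cN$ of $J$-holomorphic discs $(D^2,\partial D^2) \to (X,L)$ carrying a distinguished boundary marked point, the resulting evaluation map being precisely the one to which Theorem~\ref{Main Theorem 1} applies. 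Theorem~\ref{Main Theorem 1} gives that the pullback $R^*(L) \to R^*(\cN)$ along this evaluation map is injective, and transferring this statement back along the comparison yields the injectivity of $\pi^*$.

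The main obstacle is that none of these moduli spaces is cut out transversely, so ``$\cM_{L,L'}$ is homotopy equivalent to $\cN$'' is not literally meaningful and the comparison must be carried out virtually. The robust way to organise it is at the level of the Thom spectra of the virtual normal bundles produced by the Atiyah-duality package behind Theorem~\ref{Main Theorem 1}: one shows that the degeneration induces a map of these stable homotopy types which is an $R$-homology equivalence (or at least surjective on $R$-homology), so that a wrong-way map $\pi_!$ dual to $\pi$ is defined, and the projection formula gives $\pi_! \pi^*(x) = x \cdot \pi_!(1)$ for all $x \in R^*(L)$; one then identifies $\pi_!(1) \in R^*(L)$, via the disc end of the degeneration, with the count computed in Theorem~\ref{Main Theorem 1}, which is a unit --- whence $\pi^*$ is injective. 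A secondary, more routine, point is to keep careful track of the boundary marked point, and of basepoints for the reduced theory $\tilde R^*$, through the conjugation and the capping-off, so as to be certain that the evaluation map one ends up with is exactly the one Theorem~\ref{Main Theorem 1} governs.
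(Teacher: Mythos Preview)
Your high-level plan --- approximate $\cM_{L,L'}$ by disc moduli spaces and invoke Theorem~\ref{Main Theorem 1} --- is correct, but the ``transfer back'' step you propose is both unnecessary and unjustified. You propose to build a wrong-way map $\pi_!$ for $\cM_{L,L'}$ by comparing Thom spectra of virtual normal bundles along the degeneration and showing this comparison is an $R$-homology equivalence. You give no argument for that equivalence; it is the entire content of the comparison and cannot simply be asserted. Moreover, the setup of Proposition~\ref{Injectivity of Evaluation Map with Spectra} produces Atiyah duality for the disc moduli spaces (where there is a Fredholm map on a Hilbert manifold), not for $\cM_{L,L'}$ directly, so your $\pi_!$ is not available from the machinery at hand.

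The paper avoids all of this. The approximation is arranged so that for each large $\ell$ there is a \emph{continuous map} $\cP_\ell \to U$, where $U$ is an arbitrary open neighbourhood of $\cM_{L,L'}$ in the ambient space $\cD_{L,L'}$, and this map commutes with evaluation at $0$. (Concretely: restrict a disc on the capsule $G_\ell$ to the finite strip $Z_\ell$, then extend by a cutoff to an element of $\cD_{L,L'}$; Hofer's Gromov-compactness variant ensures the result lands in $U$ once $\ell$ is large.) Since $\pi^*\colon R^*(L)\to R^*(\cP_\ell)$ is injective by Theorem~\ref{Main Theorem 1} and factors through $R^*(U)$, the map $R^*(L)\to R^*(U)$ is injective for every such $U$. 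The passage from neighbourhoods to $\cM_{L,L'}$ itself is then the \emph{continuity axiom}
\[
R^*(\cM_{L,L'})\;\cong\;\varinjlim_{U\supset \cM_{L,L'}} R^*(U),
\]
together with exactness of direct limits. No virtual comparison, no $\pi_!$, and no statement about Thom spectra of $\cM_{L,L'}$ is needed. Your talk of ``capping off'' punctures via removable singularities is also not how the approximation runs: one does not turn strips into discs, but rather takes discs on long capsules and shows their restrictions approximate strips.
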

     
    The main work of the paper, done in \textsection \ref{Section 2}, lies in the proof of the auxiliary result Theorem \ref{Main Theorem 1}. In \textsection \ref{Section 3} we approximate $\cM_{L,L'}$ by moduli spaces of pseudoholomorphic discs. The continuity axiom, which is satisfied by generalised cohomology theories as defined in \textsection \ref{2.1}, allows us to deduce Proposition \ref{Main Theorem 2}. 
     
    As in \cite{Ho88}, we combine this result in \textsection \ref{Section 4} with standard Lusternik-Schnirelmann theory to obtain the following lower bound.
     
    \begin{thm}\label{Intersection Points and Cuplength} Suppose $L$ satisfies Assumption \ref{relativeexactness} and Assumption \ref{Assumption} for a ring spectrum $R$. Then the number of intersection points between $L$ and $L'$ satisfies
    	$$\#L \cap L' \geq c_{R}(L).$$
    \end{thm}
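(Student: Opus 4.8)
The plan is to derive the bound from Proposition~\ref{Main Theorem 2} by a Lusternik--Schnirelmann argument, following Hofer~\cite{Ho88}. If $L\cap L'$ is infinite there is nothing to prove, so I would assume $N:=\#(L\cap L')<\infty$; then $L\cap L'$ is a finite, discrete subset of $L$, and it is nonempty because $\cM_{L,L'}\neq\emptyset$ by Proposition~\ref{Main Theorem 2}. Unravelling the definition of the cuplength, the inequality to be proved is equivalent to $c_R(L)\leq N$, i.e.\ to the statement that every product $\alpha_1\cdots\alpha_N$ of $N$ reduced classes $\alpha_j\in\tilde R^*(L)$ vanishes. Fix such classes; I will show $\alpha_1\cdots\alpha_N=0$.

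First I would set up an open cover of $\cM_{L,L'}$ indexed by the intersection points. A finite-energy $J$-holomorphic strip $u\in\cM_{L,L'}$ has well-defined asymptotic limits $u(\pm\infty):=\lim_{s\to\pm\infty}u(s+it)\in L\cap L'$, so there is an evaluation-at-$+\infty$ map $e_+\colon\cM_{L,L'}\to L\cap L'$, $u\mapsto u(+\infty)$. Granting that $e_+$ is continuous into the discrete set $L\cap L'$, the preimages $U_p:=e_+^{-1}(p)$, $p\in L\cap L'$, form a partition of $\cM_{L,L'}$ into $N$ clopen subsets; enumerate $L\cap L'=\{p_1,\dots,p_N\}$.

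Next I would show $\pi|_{U_p}\colon U_p\to L$ is nullhomotopic for each $p$. The translation $\RE$-action $(\tau\cdot u)(z):=u(z+\tau)$ on $\cM_{L,L'}$ preserves each $U_p$, and
\begin{equation*}
 H\colon[0,\infty]\times U_p\to L,\qquad H(\tau,u):=u(\tau)\ (=(\tau\cdot u)(0)),\quad H(\infty,u):=p,
\end{equation*}
is a homotopy from $\pi|_{U_p}=H(0,\cdot)$ to the constant map $p$. Since $L$ is path-connected and $\alpha_j$ is reduced, $\alpha_j$ restricts to $0$ at the point $p_j$, hence $(\pi^*\alpha_j)|_{U_{p_j}}=(\pi|_{U_{p_j}})^*\alpha_j=0$. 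Therefore $\pi^*\alpha_j$ lifts to a class in the relative group $R^*(\cM_{L,L'},U_{p_j})$; the cup product of these lifts lies in $R^*\bigl(\cM_{L,L'},\bigcup_{j}U_{p_j}\bigr)=R^*(\cM_{L,L'},\cM_{L,L'})=0$, and it maps to $\pi^*\alpha_1\cdots\pi^*\alpha_N=\pi^*(\alpha_1\cdots\alpha_N)$ in $R^*(\cM_{L,L'})$. Thus $\pi^*(\alpha_1\cdots\alpha_N)=0$, and since $\pi^*$ is injective by Proposition~\ref{Main Theorem 2} we get $\alpha_1\cdots\alpha_N=0$, which establishes $c_R(L)\leq N=\#L\cap L'$.

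I expect the main obstacle to be the analysis underlying the second step: making $e_+$ continuous and $H$ jointly continuous up to $\tau=\infty$ requires uniform asymptotic control of the strips in $\cM_{L,L'}$, and — crucially — no transversality of $L\cap L'$ is assumed. This rests on the compactness of $\cM_{L,L'}$, which Assumption~\ref{relativeexactness} provides by ruling out disc and sphere bubbling, together with the usual exponential convergence estimates for finite-energy strips near intersection points; in the non-transverse case these are of the type established by Hofer~\cite{Ho88} and are also what lies behind the approximation of $\cM_{L,L'}$ by moduli of discs in \textsection\ref{Section 3}. One should additionally check the formal ingredients used above — the relative cup product for an open cover and the vanishing $R^*(Y,Y)=0$ — within the axiomatic setup for generalised cohomology of \textsection\ref{2.1}; these are routine.
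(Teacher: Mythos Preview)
Your approach differs substantially from the paper's. The paper does not decompose $\cM_{L,L'}$ by asymptotic limits; instead \S\ref{Section 4} runs the classical Lusternik--Schnirelmann minimax machinery: an index function $I(S)$ is defined as the minimal size of an open cover $\{U_i\}$ of $S$ with $\pi^*_{U_i}=0$; one shows $I(\cM_{L,L'})\geq c_R(L)$ directly from Proposition~\ref{Main Theorem 2} and Lemma~\ref{rel cup}; and then $\#L\cap L' \geq I(\cM_{L,L'})$ is obtained by introducing a continuous action-like function $\sigma$ that is strictly decreasing along non-constant translation orbits and extracting minimax critical values $d_1\leq\dots\leq d_{I(\cM_{L,L'})}$. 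This uses only the compactness of $\cM_{L,L'}$ and the existence of $\sigma$, never the asymptotic behaviour of individual strips.

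The obstacle you flag is genuine and is precisely why the paper (and Hofer~\cite{Ho88}) take the minimax route. Continuity of $e_+$ and joint continuity of $H$ at $\tau=\infty$ amount to \emph{uniform} convergence $u(s+it)\to e_+(u)$ over $u$ in a neighbourhood. With isolated but possibly degenerate intersection points there is no exponential decay available, and $C^\infty_{\mathrm{loc}}$-convergence in $\cM_{L,L'}$ gives no direct control at $s=+\infty$; this uniformity is not established in \cite{Ho88} and would have to be supplied separately. The minimax argument sidesteps exactly this difficulty.

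There is also a formal point you dismiss as routine but which the paper explicitly addresses: for generalised cohomology on spaces that need not be CW complexes there is in general \emph{no} relative cup product $R^*(X,A)\otimes R^*(X,B)\to R^*(X,A\cup B)$ (see the paragraph preceding Lemma~\ref{rel cup}), so the step ``the cup product of these lifts lies in $R^*(\cM_{L,L'},\bigcup_j U_{p_j})$'' is not justified as written. If your $U_{p_j}$ are genuinely clopen this is easily repaired via the product decomposition $R^*(\cM_{L,L'})\cong\prod_j R^*(U_{p_j})$, since on each factor one of the $\pi^*\alpha_j$ vanishes; if they are merely open you must instead invoke Lemma~\ref{rel cup}.
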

    Note that we do not assume that the Lagrangians $L$ and $L'$ intersect transversally.
    
    \smallskip
 
 \begin{rem} If $L = L_1 \sqcup \dots \sqcup L_k$ is a disjoint union of Lagrangians satisfying Assumption \ref{relativeexactness}, we may write $L' = L'_1 \sqcup \dots \sqcup L'_k$ with $L'_i$ Hamiltonian isotopic to $L_i$. Theorem \ref{Intersection Points and Cuplength} applied to each pair $L_i$ and $L'_i$ shows that 
 \begin{equation*}\#L \cap L' \geq \sum_{i=1}^kc_{R}(L_i).\end{equation*}
 \end{rem}

In \textsection \ref{Examples} we give two examples where Theorem \ref{Intersection Points and Cuplength} represents a stronger bound than what was previously known. This uses computations in \cite{IM04} of the cuplengths of the compact symplectic groups $\Sp(2)$ and $\Sp(3)$ with respect to certain generalised cohomology theories. \\

This shows that refining standard techniques via stable homotopy theory can result in stronger estimates.\\

    As we do not assume the transversality of $L$ and $L'$, to our knowledge there is no analogue of our strategy of proof using the setup in \cite{CoJoSe95,CJS09}. However, it may be possible to use their setup to prove Theorem \ref{Intersection Points and Cuplength} using the strategy of \cite{Go21a} instead.\\
    
    Suppose $X$ is compact and symplectically aspherical. If $\psi$ is a (possibly degenerate) Hamiltonian diffeomorphism of $X$, we can apply Theorem \ref{Intersection Points and Cuplength} to the graph of $\psi$ in $X\times X$ to deduce the Hamiltonian version of this inequality.
    
    \begin{cor}\label{corolary} 
        The number of fixed points of $\psi$ satisfies
        $$\#\fixp(\psi) \geq c_{R}(X).$$
    \end{cor}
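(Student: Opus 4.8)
The plan is to reduce the statement to Theorem \ref{Intersection Points and Cuplength} via the standard diagonal trick. I would equip $\tilde X := X \times X$ with the symplectic form $\tilde\omega := \omega \oplus (-\omega)$; since $X$ is compact, $\tilde X$ is closed, so the first part of Assumption \ref{relativeexactness} is automatic. Set $L := \Delta = \{(x,x) : x \in X\}$, a closed connected Lagrangian of $\tilde X$ diffeomorphic to $X$, and $L' := \graph(\psi) = \{(x,\psi(x)) : x \in X\}$, which is also a closed Lagrangian. Picking a Hamiltonian isotopy $(\psi_t)_{t \in [0,1]}$ of $X$ with $\psi_0 = \ide$, $\psi_1 = \psi$, generated by $(H_t)$, the isotopy $\ide \times \psi_t$ is a Hamiltonian isotopy of $(\tilde X, \tilde\omega)$ (generated by $(x,y) \mapsto -H_t(y)$) that carries $\Delta$ onto $\graph(\psi_t)$; hence $L'$ is Hamiltonian isotopic to $L$. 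Moreover $(x,x) \in \graph(\psi)$ if and only if $\psi(x) = x$, so $\#(L \cap L') = \#\fixp(\psi)$, and it remains to verify the hypotheses of Theorem \ref{Intersection Points and Cuplength} for the pair $(L, L')$.

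The one step carrying genuine content is checking that $\Delta$ is relatively exact, which is exactly where symplectic asphericity enters. Given a smooth disc $u = (u_1, u_2)\colon (D^2, \partial D^2) \to (\tilde X, \Delta)$, its components $u_1, u_2\colon D^2 \to X$ agree along $\partial D^2$, so gluing them along that circle produces a sphere $v\colon S^2 \to X$ with
$$\int_{S^2} v^*\omega = \int_{D^2} u_1^*\omega - \int_{D^2} u_2^*\omega = \int_{D^2} u^*\tilde\omega.$$
Symplectic asphericity makes the left-hand side vanish, so $\tilde\omega \cdot \pi_2(\tilde X, \Delta) = 0$; thus the second part of Assumption \ref{relativeexactness} holds for $\Delta$.

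It then suffices to have Assumption \ref{Assumption} for the moduli spaces of pseudoholomorphic strips and discs associated to $(\tilde X, \Delta, \graph(\psi))$ — this is part of the standing hypotheses, holds unconditionally for $R = H\bZ/2$, and is guaranteed by Proposition \ref{Orientability Holds Often} in the other listed cases. Granting it, Theorem \ref{Intersection Points and Cuplength} yields
$$\#\fixp(\psi) = \#(L \cap L') \geq c_R(\Delta) = c_R(X),$$
using $\Delta \cong X$. I do not anticipate any real obstacle here: beyond the relative-exactness computation above, the argument is purely the dictionary between fixed points of $\psi$ and intersection points of the diagonal with the graph of $\psi$ in $X \times X$. (When $X$ is disconnected one runs this argument on each component separately, just as in the disjoint-union remark above.)
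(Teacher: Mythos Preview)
Your proposal is correct and follows exactly the approach the paper indicates: the paper's entire argument is the sentence preceding the corollary, namely that one applies Theorem \ref{Intersection Points and Cuplength} to the diagonal and the graph of $\psi$ in $(X\times X,\omega\oplus(-\omega))$. You have simply filled in the routine verifications (relative exactness of $\Delta$ from symplectic asphericity, the Hamiltonian isotopy from $\Delta$ to $\graph(\psi)$, and the identification $\Delta\cap\graph(\psi)\cong\fixp(\psi)$), and you correctly flag that Assumption \ref{Assumption} must be interpreted in the product setting.
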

   In this setting, Conjecture \ref{c} (which implies Corollary \ref{corolary}) has already been verified; see \cite[Theorem A]{Rud99}, \cite[Corollary 4.2]{OR99} and \cite[Theorem 8.28]{CLOT03}. Lusternik-Schnirelmann theory has even been used to extend such estimates to Hamiltonian homeomorphism as in \cite{BHS21}. In a slightly different direction, Schwarz used the cuplength of the quantum cohomology of $X$ to achieve lower bounds, \cite{S98}. It would be interesting to see if the the cuplength of quantum K-theory, defined in \cite{AMS23}, could be used in a similar way to bound the number of fixed points from below.

\subsection*{Acknowledgements}
    Both authors are grateful to Ailsa Keating and Ivan Smith for valuable discussions and for suggesting this project in the case of the latter. Oscar Randal-Williams pointed out the example of $\Sp(2)$ and gave helpful feedback, as did Jonny Evans. The first author thanks Matija Sreckovic for comments on an earlier draft. The second author would also like to thank Jack Smith, Nick Nho and Sam Frengley for helpful discussions. The authors also thank the anonymous referee for helpful feedback.\\ Both authors were funded by the EPSRC during their graduate studies. The first author is supported by ERC grant No. 864919. The second author is supported by the Engineering and Physical Sciences Research Council [EP/W015889/1]. This work was finished while both authors were resident at the Simons Laufer Mathematical Sciences Institute, supported by the NSF grant No. DMS-1928930.
\section{Homotopy and Fredholm theory}\label{Section 2}

\subsection{Generalised cohomology and Thom spectra}\label{2.1}

For our purposes, it suffices to work with classical spectra as defined in \cite{Rud98} or \cite{Ad95}. However, our definitions require some care, as we will take the generalised (co)homology of spaces which are not necessarily homotopy equivalent to a CW complex. A generalised cohomology theory defined on CW complexes can always be extended to all spaces, but this extension may not be unique. We need an extension that satisfies a certain continuity property, namely the statement of \cite[Lemma 5.2]{AMS21}.\par
Unless otherwise specified, we work with spectra whose level spaces are homotopy equivalent to CW complexes. We denote by $\bS$ the sphere spectrum. All our spaces are assumed to be compactly generated and Hausdorff. Our ring spectra need not be commutative.\\

Given an $\Omega$-spectrum $R$, we define, for a pointed space $X$, the \emph{$n^{\mathrm{th}}$ $R$-homology and $R$-cohomology groups} to be 
$$R_n(X) := \pi_n(X \smap R_n) \quad \quad \quad \quad R^n(X) := [X,R_n]_*$$
respectively, for $n$ in $\bZ$, where $[\cdot, \cdot]_*$ denotes the set of pointed homotopy classes of maps. As $R_n\simeq \Omega^2 R_{n + 2}$, the sets $R_n(X)$ and $R^n(X)$ carry a natural abelian group structure for all $n$.\\ 

We recall the definition of relative $R$-(co)homology.\\
 Given an inclusion $j\cl A\hkra X$ of pointed spaces, we denote by 
	$$C_X A := C_A\cup_j X$$ 
	the \emph{(reduced) mapping cone} of $j$, where $C_A$ is the cone over $A$. We view this as a pointed space, with basepoint the vertex of $C_A$ (or equivalently the basepoint of $X$).

The inclusion $X\hkra C_XA$ is a cofibration and collapsing $X$ induces a natural map $C_XA\ra \Sigma A$. By \cite[Theorem 4.6.4]{tD08} these maps fit into an $h$-coexact sequence
$$A\ra X\ra C_XA \ra \Sigma A \ra \Sigma X \ra \dots$$
In particular, there exists for any pointed space $W$ a long exact sequence of pointed sets
\begin{equation}\label{dold-puppe 1}\dots\ra [\Sigma X,W]_* \ra [\Sigma A,W]_*\ra  [C_XA,W]_*\ra [X,W]_* \ra [A,W]_* \end{equation}

The \emph{relative $R$-(co)homology} of $(X,A)$ is defined by 
$$R\chc(X,A) := R\chc(C_{X}A) \qquad \qquad R^*(X,A) := R^*(C_{X}A).$$
We recover the usual long exact sequence of a pair in $R$-cohomology due to \eqref{dold-puppe 1}. Similarly, there is a long exact sequence of a pair in $R$-homology.\par
In the case of a pair of unpointed spaces $(X, A)$, one simply considers the pair $(X_+,A_+)$, where $\cdot_+$ denotes the addition of a disjoint basepoint.\\

A \emph{ring spectrum} consists of an $\Omega$-spectrum $R$ endowed with both a multiplication map $\mu \cl R\smap R\to R$ and a unit $\iota \cl \bS\to R$ such that the usual associativity and unit diagrams commute up to homotopy. Note that we do not require that this product is homotopy commutative. In this case we can define a cup product 
$$\cdot\; \cl R^*(X)\otimes R^*(X)\ra R^*(X)$$
by letting $\alpha \cdot \beta$ be the composite
\begin{equation}\label{cup product de}X\xra{\Delta} X\smap X \xra{\alpha\smap \beta} R_n\smap R_k \ra  (R\smap R)_{n+k} \xra{\mu}R_{n+k}\end{equation}
for $\alpha\in R^n(X)$ and $\beta\in R^k(X)$, where the map $R_n\smap R_k \ra (R\smap R)_{n+k}$ comes from the construction of the smash product. As the cup product is functorial, it induces a cup prodct 
$$\cdot \cl \tilde{R}^*(X)\otimes \tilde{R}^*(X)\to \tilde{R}^*(X)$$
on reduced $R$-cohomology. Recall that the \emph{cuplength} of a path-connected space $X$ is given by 
c\begin{equation}\label{cuplength de} c_{R}(Y) := \inf\{k \in \bN: \forall \alpha_1,\dots,\alpha_k \in \tilde{R}^*(X) : \alpha_1 \cdot \dots\cdot \alpha_k = 0\}.\end{equation}

An important property of the cuplength is that it provides a lower bound for the Lusternik-Schnirelmann category, a variant of which is defined in \textsection\ref{Section 4}. This is clearly true for singular cohomology; however, when one works outside the setting of CW complexes, the cup product of generalised cohomology theories does not necessarily descend to a map
$$R\ucc(X,A)\otimes R\ucc(X,B) \ra R\ucc(X,A\cup B).$$
Nonetheless, we can make the following two observations, which are sufficient to bound the Lusternik-Schnirelmann category from below and will be used in \textsection\ref{Section 4}.

\begin{rem}\label{Cup Product and Relative Cohomology}
    Suppose $\alpha \in R^n(X)$ and $\beta \in R^m(X)$ admit representatives $\tilde{\alpha}: X \rightarrow R_n$ and
    $\tilde{\beta}: X \rightarrow R_m$
    which send subspaces $A, B \subseteq X$ to the basepoint $*$ respectively. Then, by construction, $\alpha \cdot \beta$ admits a representative $X \rightarrow R_{n+m}$ sending $A \cup B$ to $*$. Induction shows the same result for classes $\alpha_1, \ldots, \alpha_k\in R^*(X)$ that admit representatives $\tilde{\alpha_i}$ mapping subspaces $A_i \subseteq X$ to $*$ respectively. In particular, 
    $$\alpha_1\cdot \ldots\cdot \alpha_k = 0\quad \text{in}\quad R^*(A_1\cup \ldots \cup A_k).$$
\end{rem}

\begin{lem}\label{rel cup}
    Let $(X, d)$ be a compact metric space, with an open cover $U_1, \ldots, U_k$. Let $\alpha_i \in R^{n_i}(X)$ be cohomology classes such that $\alpha_i|_{{U_i}} = 0$ in $R^{n_i}({U}_i)$ for all $i$. Then the product $\alpha_1 \cdot \ldots \cdot \alpha_k$ vanishes in $R^*(X)$.
\end{lem}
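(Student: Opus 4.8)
The plan is to reduce the statement to Remark \ref{Cup Product and Relative Cohomology} by replacing the open sets $U_i$ with a slightly shrunken closed cover and choosing good representatives for the classes $\alpha_i$. First I would use compactness of $(X,d)$ to produce, for each $i$, a closed set $C_i \subseteq U_i$ such that the $C_i$ still cover $X$; concretely, take a Lebesgue number $\delta > 0$ for the cover $\{U_i\}$ and let $C_i$ be the closure of the set of points whose $\delta/2$-ball is contained in $U_i$, or more simply apply the standard shrinking lemma for finite open covers of a normal (here metric) space to get an open cover $\{V_i\}$ with $\overline{V_i}\subseteq U_i$, and set $C_i := \overline{V_i}$. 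The key point is that each $C_i$ is a closed (hence, in a metric space, a neighbourhood deformation retract — or at least cofibrant enough) subset of $X$ contained in $U_i$.

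Next, for each $i$, I would promote the vanishing $\alpha_i|_{U_i} = 0$ to a representative of $\alpha_i$ that is genuinely constant on $C_i$. Pick a representative $\tilde\alpha_i \colon X \to R_{n_i}$ of $\alpha_i$. Since $\alpha_i|_{U_i} = 0$, the restriction $\tilde\alpha_i|_{U_i}$ is pointed-nullhomotopic; I would extend this nullhomotopy. Using that the inclusion $C_i \hookrightarrow X$ is a cofibration (true for a closed subset of a metric space that is a neighbourhood retract, which our $C_i$ is by construction — or one can first replace $C_i$ by a slightly larger open $V_i'$ with $\overline{V_i}\subseteq V_i' \subseteq U_i$ and homotope there), the homotopy extension property lets me modify $\tilde\alpha_i$ within its homotopy class to a new representative $\tilde\alpha_i'$ with $\tilde\alpha_i'(C_i) = *$. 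This is exactly the hypothesis needed to feed into Remark \ref{Cup Product and Relative Cohomology}.

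Applying Remark \ref{Cup Product and Relative Cohomology} to the classes $\alpha_1,\dots,\alpha_k$ with the representatives $\tilde\alpha_1',\dots,\tilde\alpha_k'$ and the subspaces $C_1,\dots,C_k$ then yields a representative of $\alpha_1\cdots\alpha_k$ which sends $C_1\cup\dots\cup C_k = X$ to the basepoint; that is, $\alpha_1\cdots\alpha_k = 0$ in $R^*(X)$, as claimed. The main obstacle, and the only place that requires real care, is the cofibration/homotopy-extension step: one must be sure that the shrunken sets can be chosen so that the relevant inclusions are cofibrations, so that a nullhomotopy of $\tilde\alpha_i$ defined only near $C_i$ can be spread out to a global homotopy of representatives. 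Everything else — the shrinking of the cover, the bookkeeping of degrees, and the appeal to the Remark — is routine, and one should flag that no CW hypothesis on $X$ is used, only that it is a compact metric (hence normal, hence the shrinking lemma applies) space.
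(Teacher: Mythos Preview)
Your overall strategy matches the paper's exactly: shrink the cover to closed sets $\overline{V_i}\subseteq U_i$, modify each representative of $\alpha_i$ to be genuinely constant on $\overline{V_i}$, then invoke Remark~\ref{Cup Product and Relative Cohomology}. You also correctly flag the one delicate step. The gap is precisely there: the claim that $C_i = \overline{V_i}\hookrightarrow X$ is a cofibration is not justified, and in general it is false. Nothing in the shrinking lemma makes $\overline{V_i}$ a neighbourhood retract; an arbitrary compact metric space need not be an ANR or even locally contractible, so closed subsets need not give NDR pairs, and you cannot invoke the homotopy extension property as a black box. Your hedged alternative (``replace $C_i$ by a slightly larger open $V_i'$ and homotope there'') gestures in the right direction but does not actually produce the extension.

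The paper resolves this by carrying out the homotopy extension by hand, using only the metric. With $H_i: U_i \times [0,1] \to R_{n_i}$ a nullhomotopy satisfying $H_i(\cdot,0)=*$ and $H_i(\cdot,1)=\tilde\alpha_i|_{U_i}$, set $d_i(x)=d(x,\overline{V_i})$, pick $\varepsilon_i>0$ with $d_i^{-1}([0,\varepsilon_i])\subseteq U_i$ (compactness of $X\setminus U_i$), and define
\[
\beta_i(x)=\begin{cases}\tilde\alpha_i(x) & \varepsilon_i\le d_i(x),\\ H_i\bigl(x,\varepsilon_i^{-1}d_i(x)\bigr) & 0\le d_i(x)\le \varepsilon_i.\end{cases}
\]
This is continuous, equals $*$ on $\overline{V_i}$, and is homotopic to $\tilde\alpha_i$ via $G_i(x,s)=H_i\bigl(x,s+(1-s)\varepsilon_i^{-1}d_i(x)\bigr)$ on the collar and $\tilde\alpha_i$ elsewhere. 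This Urysohn-type interpolation is the substitute for HEP and needs only that $X$ is compact metric; once you have the $\beta_i$, the rest of your argument goes through verbatim.
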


\begin{proof} Let $V_1,\dots,V_k$ be an open cover of $X$ so that $\cc{V_i}\sub U_i$ for each $i$. Set $d_i := d(\cdot,\cc{V_i})$. As $A_i:= X\sm U_i$ is disjoint from $\cc{V_i}$ and compact, there exists $\epsilon_i > 0$ so that $d_i\inv([0,\varepsilon_i])\sub U_i$. \par
    Pick maps $\tilde{\alpha}_i: X \rightarrow R_{n_i}$ representing each $\alpha_i \in R^{n_i}(X)$. Since $\alpha_i|_{U_i} = 0$, we can choose nullhomotopies
    $$H_i: U_i \times [0, 1] \rightarrow R_{n_i}$$ 
    such that $H_i(\cdot, 0) \equiv *$ and $H_i(\cdot, 1) = \tilde{\alpha}_i|_{U_i}$.\par 
Define maps $\beta_i: X \rightarrow R_{n_i}$ by
    $$\beta_i(x) := \begin{cases}
        \tilde{\alpha}_i(x) & \mathrm{ if }\; \varepsilon_i \leq d_i(x)\\
        H_i(x, \varepsilon_i\inv d_i(x)) & \mathrm{ if }\; 0 \leq d_i(x) \leq \varepsilon_i.
    \end{cases}$$
    Then $\beta_i \simeq \tilde{\alpha}_i$, via the homotopy $G_i: X \times [0, 1] \rightarrow R_{n_i}$ given by
    $$G_i(x, s) := \begin{cases}
        \tilde{\alpha}_i(x) & \mathrm{ if }\; \varepsilon_i \leq d_i(x)\\
        H_i(x, s + (1-s) \varepsilon_i\inv d_i(x)) & \mathrm{ if }\; 0\leq d_i(x) \leq \varepsilon_i.
    \end{cases}$$

    Hence $\beta_i$ is a representative of $\alpha_i \in R^{n_i}(X)$. $\beta_i$ sends $V_i$ to the basepoint $*$ so $\alpha_i \cdot \ldots \cdot \alpha_k = 0$ in $R^*(X)$ by Remark \ref{Cup Product and Relative Cohomology}.
\end{proof}

\medskip

Suppose now that $X$ is a compact space and that $\xi \cl F\ra X$ is a vector bundle of rank $k$. Let $F_\infty$ be its fibrewise one-point compactification. This is a sphere bundle over $X$ with a canonical section $s_\infty$ given by the point at infinity in every fibre. The \emph{Thom space} of $\xi $ is defined to be the pointed space
$$\Tho(\xi) := C_{F_\infty} \im(s_\infty)$$
and its \emph{Thom spectrum}, written $X^\xi$ or $X^F$, to be the spectrum $\Sigma^{\infty}\Tho(\xi)$. In particular, if $\xi$ is the trivial bundle of rank $k$, then $\Tho(\xi) = \Sigma^kX_+$ and $X^\xi = \Sigma^{\infty + k} X_+$. Note that if $X$ is not a CW complex, $\Tho(\xi)$ might not be either. These are the only spectra whose level spaces are not CW complexes that we will encounter.\par 

For a virtual vector bundle of the form $\eta - \bR^N$, we let $\rank(\eta) - N$ be its \emph{rank} and define its \emph{Thom spectrum} to be
$$X^{\eta - \bR^N} := \Sigma^{-N} X^\eta.$$ 
All our virtual bundles will be of this form, so this definition is sufficient for our purposes.\footnote{The homotopy type of the Thom spectrum only depends on the stable isomorphism class of the virtual vector bundle. Due to compactness, any virtual bundle on $X$ is stably isomorphic to one of the considered form, so it suffices for our applications.}\\

Let $R$ be a ring spectrum and $\xi$ a virtual vector bundle over $X$ of rank $k$. An \emph{$R$-orientation} of $\xi$ is an element $u \in R^k(X^\xi)$ such that for any map $j: \Sigma^k \bS \rightarrow X^\xi$ which is a (stable) homotopy equivalence to a fibre, we have  
$$j^*u = \pm [\iota] \in R^k(\Sigma^k\bS) \cong R^0(\bS).$$ 
where $[\iota]$ is the homotopy class of the unit map. Any trivial bundle is $R$-orientable, and if two out of $\xi$, $\eta$ and $\xi \dsm \eta$ are $R$-oriented, then so is the third.\par 

By the Thom isomorphism theorem, \cite[Theorem V.1.3]{Rud98} any $R$-orientation on a virtual vector bundle $\xi$ over a compact CW complex $X$ induces a natural isomorphism
$$R^{* + k}(X^{\xi})\cong R^*(X).$$
By \cite[Lemma 5.2]{AMS21}, this also holds when $X$ is a compact subset of a manifold $M$, and both $\xi$ and its $R$-orientation are pulled back from $M$.\par

We will need the following form of Atiyah duality, which can be viewed as a form of Poincar\'e duality for generalised cohomology theories.

\begin{thm}[{\cite[Theorem 5.2]{AMS21}}]\label{} Let $M$ be a smooth (not necessarily compact) manifold, possibly with boundary, and suppose $Z \sub M$ is any compact subset. Then there is a canonical isomorphism
	$$R_{-*}(M,M\sm Z) \;\cong\; R^*\lbr{Z^{-TM|_Z}}$$
	compatible with restriction to smaller closed subsets $Z' \subseteq Z$.\par 
\end{thm}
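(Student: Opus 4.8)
My plan is to deduce the theorem from classical Atiyah duality, realised by Pontryagin--Thom collapse maps, together with the continuity property of $R^*$ recorded in \cite[Lemma 5.2]{AMS21}, which is exactly what lets one pass from smooth compact pieces to an arbitrary compact $Z$. First I would reduce to a Euclidean neighbourhood. Since $Z$ is compact, choose a relatively compact open neighbourhood $U$ of $Z$ in $M$ and a smooth embedding $U\hkra\bR^N$ for $N$ large (the case $\pd M\neq\emptyset$ needs only routine modifications), with normal bundle $\nu\to U$, so that $TU\dsm\nu\cong\bR^N$. Then $-TM|_Z\cong \nu|_Z-\bR^N$ and $Z^{-TM|_Z}=\Sigma^{-N}\Sigma^\infty\Tho(\nu|_Z)$; moreover, by excision $R_{-*}(M,M\sm Z)\cong R_{-*}(U,U\sm Z)$, so it suffices to work inside $U$.

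Next I would dispose of the case of a smooth compact piece. For a compact codimension-zero submanifold-with-boundary $W\sub U$ with $Z\sub\Int W$, one has $TW\cong TM|_W$, hence $W^{-TW}=W^{-TM|_W}$, and classical Atiyah duality for manifolds with boundary --- the Pontryagin--Thom collapse $S^N\ra\Tho(\nu|_W)$ associated to a neat embedding and a tubular neighbourhood, composed with the fibrewise diagonal --- exhibits $W^{-TM|_W}=\Sigma^{-N}\Sigma^\infty\Tho(\nu|_W)$ as the Spanier--Whitehead $N$-dual of $\Sigma^\infty(W/\pd W)$. Combined with excision and the collar neighbourhood theorem this gives a natural isomorphism
$$R_{-*}(U,U\sm W)\;\cong\; R_{-*}(W,\pd W)\;\cong\; R^*\lbr{W^{-TM|_W}},$$
which one arranges to be compatible with inclusions $W'\sub W$.

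Finally I would pass to $Z$ by a colimit. The compact set $Z$ has a neighbourhood basis of such submanifolds $W$ --- for instance the regular sublevel sets of a smooth function $U\ra\bR_{\geq 0}$ vanishing exactly on $Z$. Over this filtered system the left-hand sides assemble, by exactness of filtered colimits applied to the long exact sequences of the pairs $(U,U\sm W)$ together with continuity of generalised homology for the filtered union $U\sm Z=\bigcup_W(U\sm W)$ of open sets, to $R_{-*}(U,U\sm Z)$; and the right-hand sides assemble, by \cite[Lemma 5.2]{AMS21} applied to the Thom spectra $\Sigma^{-N}\Sigma^\infty\Tho(\nu|_W)$, to $R^*\lbr{Z^{-TM|_Z}}$. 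The resulting isomorphism is canonical and, being built from collapse maps functorial in the subset, is compatible with restriction to a smaller closed $Z'\sub Z$.

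The delicate points I expect are twofold. First, the colimit step: one must check that \emph{both} sides convert the inverse system of compact neighbourhoods of $Z$ into the same colimit --- on the cohomology side this is precisely the continuity axiom, which genuinely uses the chosen extension of $R^*$ beyond CW complexes (the Thom spectra of the $W$'s have level spaces that need not be CW complexes, which is exactly the situation \cite[Lemma 5.2]{AMS21} is designed for), and on the homology side it relies on the standard continuity of homology under filtered colimits of spaces. Second, one must make the Pontryagin--Thom construction natural enough in the auxiliary choices (the embedding into $\bR^N$ and the tubular neighbourhood) that the isomorphism it produces is genuinely canonical; the standard remedy is that the space of such choices is contractible, so any two give canonically homotopic collapse maps.
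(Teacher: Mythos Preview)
The paper does not supply its own proof of this statement: it is quoted verbatim as \cite[Theorem~5.2]{AMS21} and used as a black box. So there is nothing in the present paper to compare your argument against.

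That said, your sketch is the standard route and is essentially how the result is established in \cite{AMS21}: reduce by excision to a Euclidean chart, invoke classical Atiyah duality (via Pontryagin--Thom collapse) for compact codimension-zero submanifolds $W$ with boundary, and then pass to the limit over a cofinal system of such $W$ shrinking to $Z$, using the continuity property \cite[Lemma~5.2]{AMS21} on the cohomology side. Your identification of the two delicate points---continuity on both sides of the isomorphism, and independence of the auxiliary embedding/tubular-neighbourhood choices---is accurate. One small refinement: on the homology side, rather than appealing to ``continuity of homology under filtered colimits of spaces'' in general, it is cleaner to note that the maps $R_{-*}(U,U\sm W)\to R_{-*}(U,U\sm Z)$ are all isomorphisms once $W$ is small enough, since each such $W$ deformation retracts onto $Z$ through a neighbourhood (or, more directly, use that the inclusion $(U,U\sm W)\hookrightarrow(U,U\sm Z)$ induces an isomorphism after taking the colimit by the tautological cofinality of the system). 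This avoids any subtlety about whether generalised homology commutes with arbitrary filtered colimits of non-CW spaces.
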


If $M$ is $R$-oriented on a neighbourhood of $Z$ and of dimension $n$, the Thom isomorphism theorem then gives an isomorphism
$$R^{* + n}(Z) \; \cong \; R_{-*}(M, M \sm Z).$$
for compact subsets $Z \sub M$. In this case, we define the \emph{fundamental class of $M$ along $Z$}, denoted $[M]_Z \in R_n(M, M \sm Z)$, to be the image of the unit in $R^0(Z)$ under this isomorphism. The class $[M]_Z$ depends only on the choice of orientation of which there can be more than two.

We will need the following version of the fact that, given an (oriented) compact manifold with boundary $M$, the fundamental class of $\partial M$ has vanishing image in the homology of $M$.

 \begin{lem}\label{Fundamental Class of Boundary}Let $W^{n+1}$ be an $R$-oriented smooth manifold with boundary and $K\sub W$ a compact subset. Then the image of $[\pd W]_{K\cap \pd W}$ in $R_n(W,W\sm K)$ is $0$.\end{lem}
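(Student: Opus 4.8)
The plan is to establish the compactly–supported refinement of the classical fact that, in the long exact sequence of $(W,\pd W)$, the connecting map carries the relative fundamental class $[W,\pd W]$ to $[\pd W]$, so that $[\pd W]$ dies in $R_\ast(W)$ by exactness. Write $K_0:=K\cap\pd W$ and note the set–theoretic identity $(W\sm K)\cup\pd W = W\sm(K\sm\pd W)$. I would run the long exact sequence of the triple $\big(W,\ (W\sm K)\cup\pd W,\ W\sm K\big)$. In it, the map $\psi\colon R_n\big((W\sm K)\cup\pd W,\ W\sm K\big)\to R_n(W,W\sm K)$ satisfies $\psi\circ\pd = 0$ (consecutive maps in the exact sequence), where $\pd$ is the connecting homomorphism out of $R_{n+1}\big(W,\ (W\sm K)\cup\pd W\big)$; moreover, precomposing $\psi$ with the inclusion of pairs $(\pd W,\pd W\sm K_0)\hkra\big((W\sm K)\cup\pd W,\ W\sm K\big)$ recovers exactly the map of the statement. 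Hence it suffices to produce a relative fundamental class $[W]_{K,\pd W}\in R_{n+1}\big(W,\ (W\sm K)\cup\pd W\big)$ with $\pd[W]_{K,\pd W}$ equal to (the image of) $[\pd W]_{K_0}$; then $\iota_\ast[\pd W]_{K_0}=\psi\big(\pd[W]_{K,\pd W}\big)=0$.

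Two ingredients are needed, both via a fixed collar $\phi\colon\pd W\times[0,1)\hkra W$. First, an excision identifying $R_n\big((W\sm K)\cup\pd W,\ W\sm K\big)\cong R_n(\pd W,\pd W\sm K_0)$: a neighbourhood of $K_0$ inside $(W\sm K)\cup\pd W$ deformation retracts, along the collar, onto a neighbourhood of $K_0$ inside $\pd W$, so that the inclusion $(\pd W,\pd W\sm K_0)\hkra\big((W\sm K)\cup\pd W,\ W\sm K\big)$ is the one used above. Second, the class $[W]_{K,\pd W}$ itself, obtained from the Atiyah duality theorem above applied to the manifold-with-boundary $W$ together with the $R$-orientation of $W$ and the Thom isomorphism — the collar being used both to handle $\pd W$ and to identify $R_\ast\big(W,\ (W\sm K)\cup\pd W\big)=R_\ast(W,W\sm(K\cap\Int W))$ with the relevant (Borel–Moore–type) Thom-dual group over $\Int W$; here the continuity axiom satisfied by our generalised homology theory lets one pass to the non-compact support $K\cap\Int W$ by a limiting argument over compact exhaustions.

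Before this I would make one harmless reduction to tame the point–set topology near $\pd W$. Since $K\cap\phi(\pd W\times[0,\varepsilon])$ shrinks to $K_0$ as $\varepsilon\to 0$, I may choose a compact codimension-$0$ submanifold-with-boundary $C_0\supseteq K_0$ of $\pd W$ and $\delta>0$ with $K\cap\phi(\pd W\times[0,\delta])\subseteq\phi(C_0\times[0,\delta])$, and replace $K$ by $K':=K\cup\phi(C_0\times[0,\delta])$, which agrees near $\pd W$ with the product $\phi(C_0\times[0,\delta])$. A routine diagram chase using compatibility of fundamental classes with restriction (as in the discussion preceding the lemma) shows this suffices. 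With $K$ product-like near $\pd W$, everything in sight is modelled on $\pd W\times[0,1)$, so the excision and the identification of $[W]_{K,\pd W}$ become routine.

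The main obstacle is precisely the identity $\pd[W]_{K,\pd W}=[\pd W]_{K_0}$ — the compatibility of the connecting homomorphism with fundamental classes. Over the collar model it reduces to the elementary computation that the relative fundamental class of the half-open interval $[0,1)$ (along a compact subinterval, relative to $\{0\}$) has boundary the generator $1\in R_0(\{0\})$, combined with multiplicativity of fundamental classes under products $\pd W\times[0,1)$, which is where the ring structure of $R$ and the construction of the duality in \cite{AMS21} are used. (One can instead try the hands-on route of sliding $\pd W$ into the interior along the collar; but the pushed-off copy of $\pd W$ only bounds a compact region after adjoining an external collar, and the resulting comparison map need not be injective, so this does not directly yield the conclusion — the relative-fundamental-class argument is the clean one.) Everything else is bookkeeping with long exact sequences and the collar.
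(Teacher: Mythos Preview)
Your approach is sound in outline and would work, but it is considerably more involved than the paper's argument. The paper proceeds by a single reduction: using excision on $R_n(W,W\sm K)$ and on $R_n(\pd W,\pd W\sm K_0)$, one may replace $W$ by a \emph{compact} smooth neighbourhood $W'$ of $K$ (so $\pd W'$ agrees with $\pd W$ near $K_0$, and the local fundamental classes match under excision). For compact $W'$ the paper simply cites the classical identity $\pd[W']=[\pd W']$ in the long exact sequence of $(W',\pd W')$ (Rudyak, \emph{On Thom Spectra}, Remark V.2.14(a)), and then a two-square commutative diagram pushes the vanishing in $R_n(W')$ down to $R_n(W',W'\sm K)\cong R_n(W,W\sm K)$. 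No relative fundamental class $[W]_{K,\pd W}$, no triple sequence, no collar computation is needed.

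By contrast, you reprove the boundary identity $\pd[W]_{K,\pd W}=[\pd W]_{K_0}$ from scratch, which is precisely what the citation to Rudyak absorbs. Your route has the virtue of being self-contained and of isolating exactly where the ring structure enters (the product formula for fundamental classes over the collar), but it forces you to confront the construction of $[W]_{K,\pd W}$ over the non-compact set $K\cap\Int W$. One small correction there: the ``limiting argument over compact exhaustions via the continuity axiom'' is not quite the right mechanism, since continuity for $R_\ast$ is a direct-limit statement and you want a class in a single group. After your product-like reduction, the cleaner fix is that the pair $\big(W,(W\sm K)\cup\pd W\big)$ deformation retracts along the collar onto $\big(W,W\sm K_\delta\big)$ for a genuinely compact $K_\delta\sub\Int W$, and then Atiyah duality applies directly. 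With that adjustment your argument goes through; the paper's excision-to-compact trick simply sidesteps the issue altogether.
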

  
\begin{proof} Suppose first that $W$ is compact. The map $\pd$ in the exact sequence of a pair
    $$R_{n+1}(W, \pd W) \xrightarrow{\pd} R_n(\pd W) \rightarrow R_n(W)$$
    sends $[W]$ to $[\pd W]$ - see \cite[Remark V.2.14.a)]{Rud98}. The claim with $K = W$ then follows from the exactness of this sequence.\par
     
    Now assume $W$ is non-compact and set $C = K \cap \pd W$.
    By excision, we may modify $W$ away from $K$, and replace $W$ with a compact smooth neighbourhood of $K$.
    Then $[\pd W]_C$ is the restriction of the fundamental class $[\pd W] \in R_n(\pd W)$. We may deduce the claim now from the first step and the commutativity of the following diagram.
    $$\begin{tikzcd}
        R_{n+1}(W, \partial W) \arrow[r, "\partial"] & R_n(\partial W) \arrow[r] \arrow[d] & R_n(W) \arrow[d] \\
        & R_n(\partial W, \partial W \setminus C) \arrow[r] & R_n(W, W \setminus K)
    \end{tikzcd}$$
\end{proof}

\subsection{Index bundles}

Let $s \cl \cB\ra \cE$ be a smooth Fredholm section of a Banach bundle over a Banach manifold intersecting the zero section of $\cE$ transversely. By the infinite-dimensional implicit function theorem \cite[Theorem A.3.3]{MS04}, the zero locus $s\inv(0)$ is a smooth manifold of dimension $\indo(Ds) = \dim(\ker(Ds))$ with tangent bundle $\ker(Ds) \ra s\inv(0)$. If $Ds$ is not fibrewise surjective, the zero locus is not necessarily smooth or may have excess dimension. The natural replacement of $\ker(Ds)$ in this case is the \emph{index bundle}, a virtual vector bundle constructed below. It relies on the notion of the stabilisation of a Fredholm operator.

\begin{de} Let $D\cl X\ra Y$ be a Fredholm operator between two Banach spaces. We call an operator $T\cl \bR^N \ra Y$ a \emph{stabilisation of $D$} if $D+ T\cl X\dsm \bR^N \ra Y$ is surjective.
\end{de}

As $T$ is compact, $D+T$ is still a Fredholm operator and 
$$\indo(D+T) = \indo(D)+N$$ 
by \cite[Theorem A.1.5(i)]{MS04}. Given a smoothly varying family of Fredholm operators we will show the existence of a smoothly varying family of stabilisations near a compact subset in Lemma \ref{Stabilisation over Compact Subset}.\par 

Let us fix our setting for the rest of this subsection. Let $Y$ be a separable Hilbert manifold, $\cH$ a separable Hilbert space and $\Lambda$ a compact finite-dimensional manifold with boundary.  We assume that $\psi \cl V \ra \cH$ is a smooth Fredholm map with $V\sub Y\times\Lambda$ an open subset. Define the open subset $$V^{\reg} := \{(x,\lambda)\in V: d_1\psi(x,\lambda) \text{ is surjective}\}$$
where $d_1$ is the derivative with respect to the first argument.

\begin{lem}\label{Stabilisation over Compact Subset} For any closed subset $A\sub V^{\reg}$ and any neighbourhood $U \sub V$ of a compact subset $K\sub V$, there exists a smooth map $T\cl V\times \RE^k\ra \cH$ such that 
	\begin{enumerate}
		\item $T(z,\cdot)$ is linear for each $z\in V$;
		\item $T(z,\cdot) = 0$ for $z \in A$;
		\item $T(z,\cdot) = 0$ for $z \in V\sm U$;
		\item $d_1\psi(x,\lambda) + T(x,\lambda,\cdot)\cl T_{(x, \lambda)}V \dsm\RE^k \ra \cH$ is surjective for $(x,\lambda) \in U$.
	\end{enumerate}
\end{lem}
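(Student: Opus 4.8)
The plan is a standard partition-of-unity argument, building $T$ locally near each point of $K$ out of finitely many "plane-wave" correction operators and then patching. First I would observe that the relevant set where we need surjectivity is $U \setminus A$ (since on $A$ we are forced to take $T = 0$, but there $d_1\psi$ is already surjective because $A \subseteq V^{\reg}$, so condition (4) is automatic there), and this set is covered together with $A$ by $U$; moreover we may as well shrink $U$ so that its closure is contained in a slightly larger open set on which we still want surjectivity. For each point $z_0 = (x_0, \lambda_0) \in \overline{U}$, the operator $d_1\psi(z_0)\colon T_{x_0}Y \to \cH$ is Fredholm, hence its image has finite codimension; pick finitely many vectors $h_1^{z_0}, \dots, h_{k_{z_0}}^{z_0} \in \cH$ whose span, together with $\im(d_1\psi(z_0))$, is all of $\cH$. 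Define $T^{z_0}\colon V \times \RE^{k_{z_0}} \to \cH$ by $T^{z_0}(z, t) = \sum_j t_j h_j^{z_0}$; this is smooth and linear in $t$. By the standard open-ness of surjectivity for Fredholm operators (the set of $z$ where $d_1\psi(z) + T^{z_0}(z,\cdot)$ is surjective is open, since surjectivity is an open condition once we fix a finite-dimensional stabilising space — see \cite[Theorem A.3.3, A.1.5]{MS04} or the proof of transversality-of-the-universal-moduli-space type statements), there is an open neighbourhood $W_{z_0} \ni z_0$ on which $d_1\psi + T^{z_0}$ is surjective.

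Next I would cover the compact set $\overline{U} \cap \overline{(V \setminus A)}$ — or more simply just $K$, which is what condition (4) ultimately must be checked on after intersecting with $U$ — by finitely many such neighbourhoods $W_{z_1}, \dots, W_{z_m}$, and also throw in the open set $V \setminus \overline{U}$ (no correction needed there) and, if one wishes, an open set containing $A$ on which no correction is needed. Choose a smooth partition of unity $\{\rho_0, \rho_1, \dots, \rho_m\}$ subordinate to $\{V \setminus \overline{U}, W_{z_1}, \dots, W_{z_m}\}$, arranged (by also using that $A$ is closed and disjoint from the part of $K$ where corrections might be needed, or simply by a cutoff vanishing near $A$) so that $\rho_1, \dots, \rho_m$ all vanish on a neighbourhood of $A$ and on $V \setminus U$. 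Set $k := k_{z_1} + \dots + k_{z_m}$ and define
$$ T(z, t^{(1)}, \dots, t^{(m)}) := \sum_{i=1}^m \rho_i(z)\, T^{z_i}(z, t^{(i)}), $$
which is smooth, linear in the $\RE^k$ variable, and satisfies (2) and (3) by construction of the $\rho_i$.

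It remains to check surjectivity (4) on $U$. Fix $(x,\lambda) \in U$. If $(x,\lambda) \in A$ there is nothing to prove since $d_1\psi$ is already surjective there. Otherwise, at least one $\rho_i(x,\lambda) > 0$; for that $i$ we have $(x,\lambda) \in W_{z_i}$, so $d_1\psi(x,\lambda) + T^{z_i}(x,\lambda,\cdot)$ is surjective, and since the image of $d_1\psi(x,\lambda) + T(x,\lambda,\cdot)$ contains the image of $d_1\psi(x,\lambda)$ together with $\rho_i(x,\lambda)\cdot\mathrm{span}(h^{z_i}_1,\dots,h^{z_i}_{k_{z_i}})$ — the latter because we may plug in $t^{(i)}$ arbitrary and all other $t^{(j)} = 0$, and scaling by the positive constant $\rho_i(x,\lambda)$ changes nothing — it contains the image of $d_1\psi(x,\lambda) + T^{z_i}(x,\lambda,\cdot) = \cH$. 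Hence $T$ is surjective at $(x,\lambda)$, completing the construction.

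The only genuinely delicate point is bookkeeping about where corrections are allowed to be nonzero: one must arrange simultaneously that the cutoffs vanish on $A$ (so that (2) holds and (4) is not spoiled there) and on $V \setminus U$ (so that (3) holds), while still covering the compact set $K$ finely enough that (4) holds throughout $U$. Since $A \subseteq V^{\reg}$ is closed and we only ever need a correction where $d_1\psi$ fails to be surjective — a set disjoint from $A$ — a standard separation of the compact part of $K$ from the closed set $A$ by disjoint opens handles this; everything else is routine openness-of-surjectivity and partition-of-unity manipulation. One subtlety worth stating explicitly in the write-up: to keep the total rank $k$ finite one uses compactness of $K$ (equivalently of $\overline{U}$ after shrinking) to extract a finite subcover, which is exactly why the hypothesis requires $K$ compact.
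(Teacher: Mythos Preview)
Your approach is essentially the same as the paper's: local finite-rank stabilisations exist by Fredholmness, surjectivity is open, compactness of $K$ gives a finite subcover, a partition of unity patches, and a cutoff enforces vanishing on $A$. The paper in fact handles condition~(2) exactly by your alternative ``or simply by a cutoff vanishing near $A$'' (a bump identically one on $V\setminus V^{\reg}$ and zero on $A$), rather than trying to arrange the partition-of-unity functions themselves to vanish on $A$, which is cleaner.

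One point to tighten: you cannot assume $\overline{U}$ is compact, since $Y$ is an infinite-dimensional Hilbert manifold; shrinking $U$ does not help with this. So the set you extract a finite subcover of must be $K$, not $\overline{U}$. This means your cover $\{V\setminus\overline{U},\,W_{z_1},\dots,W_{z_m}\}$ need not cover $V$ (points of $\overline{U}\setminus\bigcup W_{z_i}$ could be missed), and hence you cannot deduce that some $\rho_i>0$ at every point of the \emph{original} $U$. The paper resolves this by simply redefining $U$ to be $\bigcup_{z\in Z} U_z$, a neighbourhood of $K$ contained in the original one; condition~(3) then still holds for the original $U$, while condition~(4) holds on the new $U$, which is all that is needed for the index bundle construction. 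With that same shrinking of $U$, your argument goes through verbatim.
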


\begin{proof} For each $z \in K$ there exists an open neighbourhood $U_z \sub V$ of $z$, an integer $k_z \geq 0$, and an operator $T_z \cl \RE^{k_z}\ra \cH$ such that 
	$$d_1\psi(y,\mu) + T_{z}  \cl T_yY \dsm\RE^{k_z} \ra \cH$$
	is surjective for $(y,\mu) \in U_z$. Let $Z \sub K$ be a finite subset such that $U:= \union{z\in Z}{U_z}$ contains $K$ and set $k := \s{z\in Z}{k_z}$. Using a smooth partition of unity subordinate to $\{U_z\}_{z\in Z}\cup \{V\sm K\}$, we obtain an operator $T' \cl V\times \bR^k\ra\cH$ satisfying all conditions save for the second one. Multiplying $T'$ with a smooth bump function which is identically one on $V\sm V^{\reg}$ and vanishes on $A$, we obtain the desired map.
\end{proof}

\begin{de} A family of operators $T$ as in Lemma \ref{Stabilisation over Compact Subset} is said to be a \emph{stabilisation of $\psi$ along $K$ relative to $A$, of rank $k$}. We call $$\Indv_K(\psi;T) := \ker(d_1\psi + T)|_U - \bR_U^k$$ 
	the \emph{index bundle of $\psi$ along $K$ (with respect to $T$)}, defined over a neighbourhood $U$ of $K$.
\end{de}

\begin{lem}\label{Dependent Index Bundles Equivalent over Intersection} Any two index bundles of $\psi$ along $K$ are stably equivalent as germs near $K$. 
\end{lem}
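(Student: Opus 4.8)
The plan is to show that any two index bundles $\Indv_K(\psi;T_0)$ and $\Indv_K(\psi;T_1)$, coming from stabilisations $T_i$ of rank $k_i$ defined over neighbourhoods $U_i$ of $K$, become isomorphic after adding trivial summands once restricted to the common neighbourhood $U := U_0 \cap U_1$ of $K$. The mechanism is the standard direct-sum trick together with homotopy invariance of the kernel bundle of a smooth family of \emph{surjective} Fredholm operators of constant index.

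First I would form the ``sum'' stabilisation $T_0 \dsm T_1 \cl V \times \bR^{k_0+k_1} \to \cH$, $(z, w_0, w_1) \mapsto T_0(z, w_0) + T_1(z, w_1)$, whose fibrewise sum with $d_1\psi$ is surjective over $U_0 \cup U_1$ (already over $U_0$ alone, and over $U_1$ alone). Over $U$, consider the linear homotopy
$$\bigl((x,\lambda),t\bigr)\;\longmapsto\; d_1\psi(x,\lambda) + T_0(x,\lambda,\cdot) + t\,T_1(x,\lambda,\cdot)\;\cl\; T_{(x,\lambda)}V \dsm \bR^{k_0} \dsm \bR^{k_1} \to \cH.$$
Since its restriction to $T_{(x,\lambda)}V \dsm \bR^{k_0} \dsm 0$ is $d_1\psi + T_0$, which is already surjective over $U \subseteq U_0$, the whole family is surjective for every $t \in [0,1]$. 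By the implicit function theorem and the stability of the index (\cite[Theorem A.3.3, Theorem A.1.5]{MS04}), its kernel is a finite-rank vector bundle over $U \times [0,1]$ of rank $\indo(d_1\psi) + k_0 + k_1$, restricting at $t = 0$ to $\ker(d_1\psi + T_0)|_U \dsm \underline{\bR^{k_1}}$ and at $t = 1$ to $\ker\bigl(d_1\psi + (T_0 \dsm T_1)\bigr)|_U$. Homotopy invariance of vector bundles over the (metrizable, hence paracompact) base $U$ gives an isomorphism between these two bundles, so subtracting $\underline{\bR^{k_0+k_1}}$ yields
$$\Indv_K(\psi; T_0 \dsm T_1)|_U \;\cong\; \bigl(\ker(d_1\psi + T_0)|_U \dsm \underline{\bR^{k_1}}\bigr) - \underline{\bR^{k_0+k_1}},$$
which is stably isomorphic to $\ker(d_1\psi + T_0)|_U - \underline{\bR^{k_0}} = \Indv_K(\psi; T_0)|_U$. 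Running the symmetric homotopy $t\,T_0 + T_1$ shows likewise that $\Indv_K(\psi; T_0 \dsm T_1)|_U$ is stably isomorphic to $\Indv_K(\psi; T_1)|_U$, and composing proves the lemma.

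The steps, in order, are: (i) note that enlarging a stabilisation by a block of zeros changes its kernel bundle only by a trivial summand, hence does not change the index bundle up to stable isomorphism; (ii) check surjectivity of the two linear homotopies, which is immediate since one endpoint is already surjective; (iii) invoke that a smooth family of surjective Fredholm operators of constant index over a paracompact base has locally trivial finite-dimensional kernel; (iv) apply homotopy invariance of vector bundles over $U$ and $U \times [0,1]$. The only delicate point is (iii)--(iv): the base $U$ is an open subset of a separable Hilbert manifold rather than a finite-dimensional space, so one must observe that it is metrizable, hence paracompact, so that these classical statements still apply. No compatibility with the relative subsets $A_i$ from Lemma \ref{Stabilisation over Compact Subset} is needed — the homotopies above need not respect them — and the conclusion is that the stable germ of the index bundle near $K$ is independent of all the choices involved, including $A$.
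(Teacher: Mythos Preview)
Your proposal is correct and follows essentially the same approach as the paper: pad both stabilisations to a common rank, run a linear homotopy of stabilisations that remains surjective throughout, and invoke homotopy invariance of the kernel bundle (the paper cites \cite[Theorem 14.3.2]{tD08} for this last step). The only cosmetic difference is that the paper interpolates directly via $(1-t)T + tS$ after padding, whereas you pass through the sum $T_0 \dsm T_1$ in two separate homotopies; your version has the mild expository advantage that surjectivity along each homotopy is immediate, while in the paper's direct interpolation one needs the (easy) observation that for $0<t<1$ the image of $(1-t)T + tS$ contains both $\im T$ and $\im S$ since $T$ and $S$ act on complementary summands.
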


\begin{proof} Suppose $T$ and $S$ are two stabilisations along $K$. We may assume without loss of generality that $d_1\psi +T$ and $d_1\psi +S$ are surjective over the same subset. As we may add factors of $\bR$ to the domain of $T$, respectively $S$, without changing the index bundle, we may assume that $T$ and $S$ are smooth maps $V\times \RE^{k+\ell}\ra \cH$ with $T$ vanishing on $V\times \RE^k\times\{0\}$ and $S$ vanishing on $V\times\{0\} \times\RE^{\ell}$. Now we can linearly interpolate between them and apply \cite[Theorem 14.3.2]{tD08}.
\end{proof}

\begin{de} We let $\Indv_K(\psi)$ denote the stable equivalence class of any $\Indv_K(\psi;T)$ and call it the \emph{index bundle of $\psi$ along $K$}.
\end{de}

\begin{de}\label{Orientability of Fredholm Map} Given a ring spectrum  $R$, the map $\psi$ is \emph{$R$-orientable along $K$} if $\Indv_K(\psi)$ is $R$-orientable on a neighbourhood of $K$. We say that $\psi$ is \emph{$R$-orientable} if it is $R$-orientable along any compact subset. 
\end{de}

\subsection{Proof of Theorem \ref{Main Theorem 1}}

The following result generalises \cite[Theorem 5]{Ho88} to arbitrary ring spectra. 

\begin{prop}\label{Injectivity of Evaluation Map with Spectra} Let $\cY$ be a smooth separable Hilbert manifold and $\cH$ be a separable Hilbert space. Let $\psi: \cY \times [0, 1] \rightarrow \cH$ be a smooth Fredholm map of index $n + 1$, and write $\psi_t$ for its restriction to $\cY \times \{t\}$. Given a ring spectrum $R$, assume
	\begin{enumerate}[\normalfont 1.]
		\item\label{proper} $\psi$ is proper with respect to a neighbourhood of $0$ in $\cH$ and $R$-orientable along $\psi\inv(0)$,
		\item\label{submersive} $\psi_0$ is submersive near $\psi_0\inv(0)$,
		\item\label{other-manifold} there exists a smooth map $\pi \cl \cY\ra N$ to a connected closed manifold $N$ such that 
		$$\pi|_{\psi_0\inv(0)}\cl \psi_0\inv(0)\ra N$$ is a diffeomorphism.
	\end{enumerate}
	If $N$ is $R$-oriented, then $\pi^*\cl R^*(N)\ra R^*(\psi_1\inv(0))$ is injective.
\end{prop}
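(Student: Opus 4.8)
The plan is to realise both $N \cong \psi_0^{-1}(0)$ and $\psi_1^{-1}(0)$ as compact subsets of the Hilbert manifold $\cY \times [0,1]$ (call it $W$, an $(n+1)$-manifold with boundary, at least after restricting to a neighbourhood of $\psi^{-1}(0)$ where an index bundle and its $R$-orientation are defined), and to compare their fundamental classes via the cobordism $\psi^{-1}(0)$ between them. Concretely, write $Z := \psi^{-1}(0) \sub W$; by properness (assumption \ref{proper}) this is compact, and $Z \cap \partial W = \psi_0^{-1}(0) \sqcup \psi_1^{-1}(0)$. Using the $R$-orientation of the index bundle $\Indv_Z(\psi)$ together with Atiyah duality (Theorem 2.1.22) and the Thom isomorphism, I would produce a fundamental class $[W]_Z \in R_{n+1}(W, W \sm Z)$, restricting to fundamental classes along the two boundary pieces. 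Then Lemma \ref{Fundamental Class of Boundary} applied to the cobordism, together with a Mayer–Vietoris / additivity argument splitting $Z \cap \partial W$ into its two components, shows that the images of $[\psi_0^{-1}(0)]$ and $[\psi_1^{-1}(0)]$ in $R_n(W, W\sm Z)$ agree up to sign. (One must be slightly careful since the moduli spaces need not be cut out transversally; this is exactly why the whole argument is phrased in terms of virtual index bundles and Atiyah duality rather than ordinary Poincaré duality on a manifold.)

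Next I would turn this into a statement about cohomology of the zero loci by capping. For any compact $Z' \sub Z$, Atiyah duality plus the Thom isomorphism for the index bundle gives a cap-product pairing $R^*(Z') \otimes R_{-*}(W, W\sm Z') \to$ (something like) $R_{-*}(W, W\sm Z'')$, compatible with restriction; pairing a class $\alpha \in R^*(N) \cong R^*(\psi_0^{-1}(0))$, pushed into $R^*(Z)$ via the retraction/inclusion, against $[W]_Z$ and restricting to the two ends, one finds that $\pi^*\alpha \cap [\psi_0^{-1}(0)]$ and (the analogous restriction to) $[\psi_1^{-1}(0)]$ have equal image in $R_*(W, W \sm Z)$. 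Since $\psi_0$ is submersive near its zero locus (assumption \ref{submersive}) and $\pi$ restricts to a diffeomorphism $\psi_0^{-1}(0) \xrightarrow{\sim} N$ (assumption \ref{other-manifold}), the cap product with $[\psi_0^{-1}(0)]$ is an honest Poincaré-duality isomorphism $R^*(N) \xrightarrow{\sim} R_{n-*}(\text{nbhd of } \psi_0^{-1}(0), \dots)$; in particular it is injective. Chasing this through the equality of images above forces $\pi^*\alpha = 0$ on $\psi_1^{-1}(0)$ only if $\alpha$ already dies in the composite through $R_*(W, W\sm Z)$ — and that composite is injective on $R^*(N)$ because it factors the PD isomorphism at the $\psi_0$ end. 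Hence $\pi^*\colon R^*(N) \to R^*(\psi_1^{-1}(0))$ is injective.

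I expect the main obstacle to be the bookkeeping around virtual vector bundles and the non-uniqueness of the $R$-orientation. Several points need care: (i) the index bundle $\Indv_Z(\psi)$ is only defined up to stable equivalence on a neighbourhood of $Z$ (Lemmas \ref{Dependent Index Bundles Equivalent over Intersection}), so one must check that the fundamental class $[W]_Z$ and all the duality isomorphisms are independent of these choices, or at least that the relevant diagrams commute; (ii) the restriction of the index bundle of $\psi$ to the boundary piece $\psi_0^{-1}(0)$ must be identified, via submersivity, with the actual tangent bundle there (up to the stabilising $\bR^k$ factor), so that the "virtual" Poincaré duality at the $\psi_0$ end reduces to classical Poincaré duality on $N$ — this is where assumption \ref{submersive} is really used; and (iii) one has to handle the half-open direction $[0,1]$ and the two boundary components carefully, essentially redoing the proof of Lemma \ref{Fundamental Class of Boundary} in the relative/virtual setting. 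Once these compatibilities are in place, the argument is the standard "a cobordism makes two fundamental classes homologous, and one end detects everything" Lusternik–Schnirelmann-flavoured trick, exactly as in \cite[Theorem 5]{Ho88} but with singular cohomology replaced throughout by $R^*$ and tangent bundles replaced by index bundles.
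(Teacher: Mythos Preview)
Your high-level strategy is the right one and matches the paper's: a cobordism argument showing that the ``fundamental class'' at the $t=1$ end agrees with that at the $t=0$ end, where the latter detects everything because $\pi|_{\psi_0^{-1}(0)}$ is a diffeomorphism. However, there is a genuine gap in the execution, and it is precisely the step you flag as an ``obstacle'' without resolving.

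You write that $W = \cY \times [0,1]$ is ``an $(n+1)$-manifold with boundary, at least after restricting to a neighbourhood of $\psi^{-1}(0)$''. This is not true: $\cY$ is an infinite-dimensional Hilbert manifold, and no neighbourhood of $K := \psi^{-1}(0)$ is finite-dimensional. Both the Atiyah duality theorem and Lemma~\ref{Fundamental Class of Boundary}, as stated, require an honest finite-dimensional manifold $M$ as the ambient space; there is no ready-made ``virtual Atiyah duality'' for compact subsets of Hilbert manifolds that you can simply invoke. Your points (i)--(iii) correctly identify this as the main difficulty, but the proposal does not say how to overcome it.

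The paper's resolution is concrete: choose a stabilisation $T: (\cY\times[0,1]) \times \bR^k \to \cH$ of $\psi$ along $K$, relative to $K_0 \times \{0\}$, and set $S := (\psi + T)^{-1}(0) \subset \cY \times [0,1] \times \bR^k$. Then $S$ is a genuine smooth finite-dimensional manifold with boundary, of dimension $n+k+1$, containing $K$ as the locus $\{z = 0\}$ and with $S_0 = K_0 \times \bR^k$. The $R$-orientability hypothesis on the index bundle translates directly into an $R$-orientation of $TS$ near $K$, so Atiyah duality and Lemma~\ref{Fundamental Class of Boundary} now apply inside $S$ without any virtual bookkeeping. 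The endgame is also tidier than your cap-product sketch: one pushes forward along $\tilde\pi = \pi \times \ide \times \ide : S \to N \times [0,1] \times \bR^k$ and observes that the composite
\[
R^*(N) \xrightarrow{\pi_1^*} R^*(K_1) \xrightarrow[\cong]{\mathrm{AD}} R_{n+k-*}(S_1, S_1 \setminus K_1) \xrightarrow{(\tilde\pi_1)_*} R_{n+k-*}(N \times \bR^k, N \times (\bR^k \setminus 0)) \xrightarrow[\cong]{\mathrm{AD}} R^*(N)
\]
is multiplication by $\mathrm{AD}((\tilde\pi_1)_*[S_1]_{K_1})$. By Lemma~\ref{Fundamental Class of Boundary} applied to the cobordism $S$, this class equals $\mathrm{AD}((\tilde\pi_0)_*[S_0]_{K_0})$, which is a unit since $\pi_0$ is a diffeomorphism. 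Hence the composite is an isomorphism, and $\pi_1^*$ is injective.

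In short: the missing idea is the explicit stabilisation that replaces the infinite-dimensional picture by a finite-dimensional one. Once you have $S$, your points (i)--(iii) essentially evaporate.
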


\begin{proof} Set $K := \psi\inv(0)$ and $I := [0,1]$. By \eqref{proper}, $K$ is compact. Given any subset $W\sub \cY\times I$ we denote by $W_t$ its fibre over $t \in I$. Let $T$ be a stabilisation of $\psi$ along $K$ relative to $K_0\times\{0\}$ of rank $k$. Set $$S := (\psi + T)\inv(0)\sub \cY\times I\times\RE^k.$$ 
	Then $S$ is a smooth (non-compact) cobordism from $S_0$ to $S_1$ with $TS = \ker(d\psi + T)$. Assumption \eqref{proper} on $\psi$ implies that $S$ is $R$-oriented on a neighbourhood of $K$. By the compactness of $T(v,t,\cdot)$ for $(v,t)\in \cY\times I$ and \cite[Theorem A.1.5.i]{MS04},
	$$\dim(S) = n+k+1.$$ 
	Note that $K = \{(x,t,z)\in S : z= 0\}$ and $S_0 = K_0\times\RE^k$. Set $$\tilde{\pi} := \pi\times\ide_I\times\ide_{\RE^k}: S \ra N \times I \times \bR^k$$ and let $\tilde{\pi}_t$ be the restriction to $\cY\times\{t\}\times\RE^k$. This fits into a commutative diagram of pairs
	\begin{center}\begin{tikzcd}
		(S_0,S_0\sm K_0)\arrow[r,"\tilde{\pi}_0"] \arrow[d,hook,"i_0"] & (N\times \RE^k,N\times (\RE^k\sm0))\arrow[d,hook,"\iota_0"]\\ 
		(S,S\sm K)\arrow[r,"\tilde{\pi}"]  &  (N\times I\times \RE^k,N\times I\times (\RE^k\sm0))\\ 
		(S_1,S_1\sm K_1)\arrow[u,hook,"i_1"] \arrow[r,"\tilde{\pi}_1"] &  (N\times \RE^k,N\times(\RE^k\sm 0))\arrow[u,hook,"\iota_1"]\end{tikzcd} 
	\end{center}
	Consider the composition
	\begin{multline}\label{iems 1}R^*(N) \xrightarrow{\pi_1^*}
    R^*(K_1) \xrightarrow[\cong]{\mathrm{AD}}
    R_{n + k - *}(S_1, S_1 \setminus K_1) \\\xrightarrow{(\pi_1)_*}
    R_{n + k - *}(N \times \bR^k, N \times (\bR^k \setminus 0)) \xrightarrow[\cong]{\mathrm{AD}}
    R^*(N)\end{multline}
    where $\mathrm{AD}$ denotes the Atiyah duality isomorphism. Note that in the second map we use the $R$-orientability assumption in (1).\par
    By construction, this map is given by multiplication by $\mathrm{AD}((\pi_1)_* [S_1]_{K_1}) \in R^0(N)$,
	which is equal to $\mathrm{AD}((\pi_0)_* [S_0]_{K_0})$ by Lemma \ref{Fundamental Class of Boundary}.
	As $\pi_0$ is a diffeomorphism, this cohomology class is a unit. Hence, \eqref{iems 1} is an isomorphism. Because it factors through the pullback  $\pi_1^*: R^*(N) \rightarrow R^*(K_1)$, the latter must be injective.\end{proof}

\smallskip

We apply this to our situation. 
  Let $G\sub \bC$ be a simply-connected compact submanifold with smooth boundary, and suppose $\{L_z\}_{z \in \partial G}$ is a \emph{Hamiltonian family} of Lagrangians in $X$. That is, there exists a (relatively exact) Lagrangian $L \sub X$ and a smooth family $\{\phi^t_z\}_{z\in \partial G,t\in [0,1]}$ of Hamiltonian isotopies (which we can assume to be compactly supported) of $X$ such that $L_z = \phi_z^1(L)$ for all $z$. We can assume that $L = L_{z_0}$ for some $z_0 \in \partial G$.\\

Consider the following moduli space of pseudoholomorphic discs with moving Lagrangian boundary conditions:
     \begin{equation}\label{changing-boundary}\cP := \set{u \in C^\infty(G,X): \hpd_J u = 0, \;E(u) < \infty,\; \forall z \in\pd G: u(z)\in L_z}\end{equation}
    where $\hpd_J$ is the Cauchy-Riemann operator associated to $J$ and $E$ is the symplectic energy.
    Let $\pi: \cP \rightarrow L$ be evaluation at $z_0$.
    
\begin{thm}\label{Main Theorem 1}
    	The pullback $\pi^*: R^*(L) \rightarrow R^*(\cP)$ is injective.
\end{thm}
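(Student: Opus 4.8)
The plan is to deduce Theorem \ref{Main Theorem 1} from Proposition \ref{Injectivity of Evaluation Map with Spectra} by exhibiting the moduli space $\cP$ of \eqref{changing-boundary} as the zero locus $\psi_1\inv(0)$ of a suitable Fredholm family $\psi$, after first reducing the problem so that the hypotheses of the Proposition are met. The key observation is that since $G$ is simply connected with smooth boundary, it is diffeomorphic to the closed disc $\bar{D}^2$, whose boundary circle $\pd G$ can be swept out by a one-parameter family; more to the point, the Hamiltonian family $\{L_z\}_{z\in\pd G}$ can be deformed through such families, keeping one marked point fixed at $z_0$ with boundary condition $L = L_{z_0}$, until the boundary conditions become constant (equal to $L$). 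This gives a homotopy of Fredholm problems which at one end is the ``moving boundary'' problem defining $\cP$ and at the other end is the constant-boundary problem whose solutions are constant maps $G \to L$ landing in $L$ (there is no energy, so holomorphic discs with constant boundary on a relatively exact $L$ are constant), and evaluation at $z_0$ identifies this end diffeomorphically with $N := L$.

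More precisely, first I would set up the Banach manifold $\cY$ of $W^{k,p}$-maps $G \to X$ with the appropriate (moving) totally real boundary conditions — via the Hamiltonian isotopies $\phi^t_z$, there is a standard trick to absorb the moving boundary condition $u(z)\in L_z$ into a fixed boundary condition $u(z) \in L$ by the substitution $u \mapsto (\phi^t_z)\inv \circ u$, at the cost of modifying the Cauchy-Riemann operator by a zeroth-order term. Then the parametrised operator $\psi : \cY \times [0,1] \to \cH := L^p(\Lambda^{0,1}_J)$ interpolates (using a cutoff in $t$ on the Hamiltonian perturbation) between $\psi_1$, whose zero set is (diffeomorphic to) $\cP$, and $\psi_0$, whose zero set is the space of $J$-holomorphic discs with boundary on the fixed $L$; by relative exactness and the removable singularity / energy argument this zero set consists only of constant maps and hence is diffeomorphic to $L$ via evaluation at $z_0$, and the linearised operator there is surjective (the linearisation at a constant map is a $\delbar$-operator on a trivial bundle pair over the disc with totally real boundary, of Maslov index $0$, which has trivial cokernel). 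This verifies hypothesis \eqref{submersive} and hypothesis \eqref{other-manifold} of Proposition \ref{Injectivity of Evaluation Map with Spectra} with $N = L$ and $\pi = \mathrm{ev}_{z_0}$.

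The remaining hypotheses require: that $\psi$ has index $n+1$ where $n = \dim L$ — this follows from Riemann--Roch for the disc, since the index at each $t$ is $\dim L + \mu$ where the Maslov index $\mu$ vanishes (all the relevant bundle pairs are homotopic to the constant one by relative exactness / the contractibility of $G$), so $\indo \psi_t = n$ and $\indo \psi = n+1$; that $\psi$ is proper with respect to a neighbourhood of $0$ — this is Gromov compactness together with Assumption \ref{relativeexactness}, which rules out disc and sphere bubbling so that sequences of solutions with uniformly bounded energy subconverge in $\cY$; and that $\psi$ is $R$-orientable along $\psi\inv(0)$ — this is precisely Assumption \ref{Assumption} (the index bundle here is the same virtual bundle appearing in that assumption, being the linearisation of the Cauchy--Riemann operator along the relevant moduli space). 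Granting all of this, Proposition \ref{Injectivity of Evaluation Map with Spectra} applies verbatim and yields that $\pi^* = \mathrm{ev}_{z_0}^* : R^*(L) \to R^*(\cP)$ is injective, which is the claim.

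The main obstacle I anticipate is the careful construction of the homotopy $\psi$ and the verification of properness: one must ensure that the interpolating family is a genuine smooth Fredholm map to a single Hilbert space $\cH$ (which requires trivialising the target bundle $L^p(\Lambda^{0,1})$ coherently over the family, or more honestly passing to a setup where $\cH$ is fixed), and — crucially — that the compactness in hypothesis \eqref{proper} holds \emph{uniformly in} $t \in [0,1]$, not just fibrewise; this is where relative exactness (Assumption \ref{relativeexactness}) does the essential work, since a priori energy bounds plus the absence of bubbling give $C^\infty_{loc}$-convergence, and the fixed Lagrangian boundary/asymptotic conditions upgrade this to convergence in the Banach manifold $\cY$. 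A secondary technical point is justifying that the zero set of $\psi_0$ is cut out transversally and is exactly $L$; this is standard (automatic transversality for index-zero discs with relatively exact boundary, or a direct computation of the linearised operator's cokernel), but needs to be stated. Everything else — the index count, the identification $\psi_1\inv(0)\cong\cP$, the application of Atiyah duality inside Proposition \ref{Injectivity of Evaluation Map with Spectra} — is then routine given the machinery already developed in \textsection\ref{Section 2}.
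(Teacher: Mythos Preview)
Your approach is essentially the paper's: absorb the moving boundary condition into a fixed one via $u \mapsto (\phi^t_z)^{-1}\circ u$ on $\cA = \{u\in W^{k,2}(G,X): u(\pd G)\sub L\}$, trivialise the target bundle to a fixed Hilbert space (the paper invokes Kuiper's theorem), and apply Proposition \ref{Injectivity of Evaluation Map with Spectra} with $N=L$ and $\pi = \eva_{z_0}$, citing \cite{Ho88} for properness and for submersivity at $t=0$.

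One genuine slip: your assertion that the Maslov index vanishes ``by relative exactness / the contractibility of $G$'' is incorrect. Relative exactness says $\omega\cdot\pi_2(X,L)=0$, which controls symplectic area but says nothing about the Maslov class; and for $t>0$ the untwisted map $u$ (as opposed to $\psi_t(u)$) need not be constant and may lie in a nontrivial relative homotopy class with $\mu\neq 0$ --- the paper flags exactly this in Remark \ref{r}. The paper's fix is to restrict the domain to the open subset $\cW\sub\cA\times[0,1]$ on which the Maslov index is zero; this contains all constant discs (hence the $t=0$ slice) and on it the Fredholm index is the required $n+1$. You should also commit to the Hilbert $W^{k,2}$ setup throughout rather than $L^p$, since both Kuiper's theorem and the hypotheses of Proposition \ref{Injectivity of Evaluation Map with Spectra} require separable Hilbert manifolds.
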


    \begin{rem}
        If the moduli space $\mathcal{P}$ were cut out transversely, this could be proved using a cobordism argument as in \cite{Por22}. On the other hand, following \cite[Remark 4.6]{Por22}, Theorem \ref{Main Theorem 1} can be used to give a slightly different proof of \cite[Corollary 1.9]{Por22}, without using any transversality results.
    \end{rem}

    \begin{rem}
        Hofer \cite{Ho88} proves Theorem \ref{Main Theorem 1} as well as Theorem \ref{Main Theorem 2}, \ref{Intersection Points and Cuplength} and Corollary \ref{corolary} in the case where $R^*$ is \v{C}ech cohomology with coefficients in $\bZ / 2$.
    \end{rem}

Using an extension of an associated family of Hamiltonians we may extend the family of Hamiltonian isotopies $\{\phi_z\}_{z\in \pd G}$ to a smooth family $\{\phi_z\}_{z\in G}$ of Hamiltonian isotopies, parametrised by $G$. \par 

Fix $k \geq 3$. Given $t \in [0,1]$, we define $\psi_t \cl W^{k,2}(G,X) \ra W^{k,2}(G,X)$ by setting 
$$\psi_t(u)(z) := \phi^t_z(u(z))$$
for $z \in G$. By assumption, $\psi_0$ is the identity. Let 
$$\cA:= \set{u\in W^{k,2}(G,X) : u(\pd G)\sub L}.$$

The smooth Banach bundle $\cE \ra \cA$ with fibre
$$\cE_u := W^{k-1,2}(G,u^*TX).$$
admits a smooth Fredholm section $\hpd_J: \cA \ra \cE$ given by 
$$\hpd_J u = \pd_s u + J(u)\pd_t u.$$

The canonical evaluation map defines a map of pairs $\eva \cl \cA\times (G,\pd G)\ra (X,L)$. By pulling back, this defines a bundle pair 
$$(F,F') := \eva^*(TX,TL) \ra \cA\times (G,\pd G).$$
Using a connection on $TX$, the linearisation of $\hpd_J$ defines a real Cauchy-Riemann operator on $(F,F')$ by \cite[Proposition 3.1.1]{MS04}. Then Assumption \ref{Assumption} states exactly that its index bundle 
is $R$-oriented. For $u \in \hpd_J\inv(0)$ we have, by the Riemann-Roch theorem, \cite[Theorem C.1.10]{MS04}, that
\begin{equation}\label{index 1}\indo(D_u\hpd_J ) = \dim(L)+\mu(F|_u,F'|_u), \end{equation}
where $\mu(F|_u,F'|_u)$ is the boundary Maslov index of the pullback of $(F,F')$ to $\{u\}\times G$. \par 

\begin{rem}\label{r}
    If $u \in \cA$ is pseudoholomorphic, then $\mu(F|_u,F'|_u) = 0$ as $u$ must be constant due to relative exactness. However, if $u$ instead satisfies that $\psi_t(u)$ is pseudoholomorphic for some $t > 0$, $u$ need not be constant and may lie in a non-trivial relative homotopy class of discs. In this case, $\mu(F|_u, F'|_u) $ might not vanish.
\end{rem}

By \cite[Theorem (3)]{Kui65} we can fix a smooth isometric trivialisation $\Psi \cl \cE\ra \cA\times \cH$, where $\cH$ is some separable Hilbert space.
Define $\cF\cl \cA\times[0,1] \ra \cH$ by
\begin{equation}\cF_t(u) := \pr_2\Psi(\hpd_J \psi_t(u))\end{equation}
letting $\pr_2$ denote the projection to the second factor. Note that $\cF_1^{-1}(0)$ is diffeomorphic via $\psi_1$ to the space $\cP$ of pseudoholomorphic maps from $G$ to $M$ which have finite energy and map $z\in \pd  G$ to $L_z$.

\begin{proof}[Proof of Theorem \ref{Main Theorem 1}]
    Let 
	$$\cW := \set{(u,t)\in \cA\times[0,1] : \mu(F|_{\psi_t(u)},F'|_{\psi_t(u)})= 0}.$$
	This is an open subset of $\cA\times [0,1]$. We restrict to the subset where the Maslov index is 0 in order to have control over the index of $\cF$, due to Remark \ref{r}. However, with a little more care the entire argument could also be applied without this restriction. Let $\pi: \cW \rightarrow L$ be the evaluation map at $z_0$.\par 
	
	By \cite[Proposition 6]{Ho88} there exists a neighbourhood $U \sub\cW$ of $\cF^{-1}(0)$ such that $\cF|_U: U \ra \cH$ is a Fredholm map of index $\dim(L)+1$ and such that $\cF|_U$ is proper with respect to a neighbourhood of $0 \in \cH$. We note that $U$ and $\cH$ are separable Hilbert manifolds. Since pseudoholomorphic discs with boundary on $L$ are constant, $\pi$ defines a diffeomorphism $\cF_0\inv(0)\ra L$. Moreover, $\cF_0$ is submersive by the proof of \cite[Lemma 5]{Ho88}, and $\cF$ is $R$-orientable by Assumption \ref{Assumption}. Thus the claim follows from Proposition \ref{Injectivity of Evaluation Map with Spectra}.
\end{proof}

\section{Approximating pseudoholomorphic strips}\label{Section 3}

The key idea in the proof of Theorem \ref{Main Theorem 2} is to study a one-parameter family of moduli spaces of pseudoholomorphic discs $\cP_\ell$ with moving boundary conditions. They approximate the moduli space of pseudoholomorphic strips $\cM_{L, L'}$ as the parameter $\ell$ tends to $\infty$. Together with \cite[Lemma 5.2]{AMS21}, this allows us to infer Proposition \ref{Main Theorem 2} from Theorem \ref{Main Theorem 1}.\par 

Throughout this section we fix a convex domain $G$ in $\bC$ with smooth boundary, such that both $(-\eta, \eta)$ and $(-\eta, \eta) + i$ are contained in $\partial G$ for some $\eta > 0$. For $\ell > 0$, define $Z_\ell := [-\ell, \ell] + [0, 1]i$, and let
$$G_\ell := {Z_\ell \cup (G + \ell) \cup (G - \ell)}$$
be a smoothing as shown below. \par
{\centering\begin{tikzpicture}[line cap=round,line join=round,>=triangle 45,x=1.0cm,y=1.0cm]
    \draw (0,0)-- (5,0);
    \draw (0,1)-- (5,1);
    \draw (5,1)-- (5,0);
    \draw (0,1)-- (0,0);            
    \draw (5,-3)-- (0,-3);
    \draw (0,-2)-- (5,-2);
    \node at (2.7,-0.2) {$ 0 $};
    \node at (2.7,0.8){$ i $};
    \node at (5.2,-0.2){$ \ell $};
    \node at (0.2,-0.2) {$ -\ell $};
    \node at (-0.8,1.3){$ Z_\ell\mathchar\numexpr"6000+`:\relax $};
    \node at (2.7,-3.2) {$ 0 $};
    \node at (2.7,-2.2) {$ i $};
    \node at (5.2,-3.2){$ \ell $};
    \node at (0.2,-3.2) {$ -\ell $};
    \node at (-0.8,-1.2) {$ G_\ell\mathchar\numexpr"6000+`:\relax  $};
    \draw [shift={(0,-2.5)}] plot[domain=1.57:4.71,variable=\t]({1*0.5*cos(\t r)+0*0.5*sin(\t r)},{0*0.5*cos(\t r)+1*0.5*sin(\t r)});
    \draw [shift={(5,-2.5)}] plot[domain=-1.57:1.57,variable=\t]({1*0.5*cos(\t r)+0*0.5*sin(\t r)},{0*0.5*cos(\t r)+1*0.5*sin(\t r)});
    \begin{scriptsize}
        \fill [color=black] (0,0) circle (1.5pt);
        \fill [color=black] (5,0) circle (1.5pt);
        \fill [color=black] (5,1) circle (1.5pt);
        \fill [color=black] (0,1) circle (1.5pt);
        \fill [color=black] (2.5,0) circle (1.5pt);
        \fill [color=black] (0,-2) circle (1.5pt);
        \fill [color=black] (5,-2) circle (1.5pt);
        \fill [color=black] (0,-3) circle (1.5pt);
        \fill [color=black] (5,-3) circle (1.5pt);
        \fill [color=black] (2.5,1) circle (1.5pt);
        \fill [color=black] (2.5,-2) circle (1.5pt);
        \fill [color=black] (2.5,-3) circle (1.5pt);
    \end{scriptsize}
\end{tikzpicture}\par }

\begin{de}
	For a domain $W$ in $\bC$ and a smooth map $u: W \rightarrow X$, we define the \emph{symplectic energy} to be
	$$E(u) := \frac12\int_W u^* \omega$$
	whenever this integral is defined.
\end{de}

When $u$ is pseudoholomorphic, the symplectic energy of $u$ is defined and non-negative, although not necessarily finite.\par 

We consider the following moduli spaces. Recall from the introduction that $L$ is a closed, relatively exact Lagrangian in $X$ and $L'$ is Hamiltonian isotopic to $L$. We denote by $Z := \bR+[0, 1]i $ the infinite strip.

\begin{de} We define
	$$\cD_{L, L'} := \set{u \in C^\infty(Z,X) : |E(u)|<  \infty,\; u(\bR)\sub L,\; u(\bR+i)\sub L'}.$$
	It contains $\cM_{L, L'} := \{u \in \cD_{L, L'}: \hpd_J u = 0\}$ as the subspace of pseudoholomorphic maps.\par
	Given $\ell > 0$ and $A \geq 0$, we set 
	$$\cF_{\ell, A} := \set{u \in C^\infty(Z_\ell,X): E(u) \leq A,\; \hpd_Ju = 0,\; u([-\ell,\ell])\sub L,\; u([-\ell,\ell]+i)\sub L'}.$$
	Given $\ell \geq 0$ and a Hamiltonian family $\{L_t\}_{t \in [0, 1]}$ of Lagrangians in $X$ with $L_0 = L$ and $L_1 = L'$, we define the moduli space
	$$\cP_\ell := \set{u\in C^\infty(G_\ell,X) : \hpd_J u = 0,\; u(s+i t)\in L_t \normalfont\text{ for } s +i  t \in \partial G_\ell}.$$
\end{de}

All of these spaces are endowed with the weak $C^\infty$ Whitney topology. By the Nash Embedding Theorem applied to the metric $g_J = \omega(\cdot,J\cdot)$, this topology is metrisable. Hofer showed in \cite[Theorems 1 and 2]{Ho88} that the moduli spaces $\cP_\ell$ and $\cM_{L, L'}$ are compact.\par
Evaluation at $0 \in \bC$ defines a continuous map, denoted by $\pi$, from each of these spaces to $L$.

\begin{rem} Pick some Hamiltonian isotopy $\{\psi^t\}_{t \in [0, 1]}$ such that $\psi^t(L) = L_t$ for all $t$. Setting
    $$L_{x + i y} := L_y\qquad \quad\text{and}\qquad \quad\psi^t_{x + i y} := \psi^{ty}$$
    for $x + i y$ in $\partial G$ shows that $\cP_\ell$ is the space of pseudoholomorphic maps $G_\ell \to M$ of finite energy which map $z \in \pd G_\ell$ to $L_z$, i.e, of the form \eqref{changing-boundary}. This allows us to apply Theorem \ref{Main Theorem 1} with $\cP = \cP_\ell$.
\end{rem}

We require the following uniform energy bound.

\begin{lem}[{\cite[Lemma 2]{Ho88}}]\label{Energy Uniformly Bounded}
	The symplectic energy is uniformly bounded on all $\cP_\ell$. More precisely, there exists a constant $C \geq 0$ such that for all $\ell > 0$ and all $u$ in $\cP_\ell$, we have $E(u) \leq C$.
\end{lem}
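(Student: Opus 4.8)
The plan is to run the classical untwisting-plus-Stokes argument: use the Hamiltonian isotopy realising the moving boundary condition to deform $u$ to a disc with boundary on $L$, kill the interior contribution with relative exactness, and bound the remaining boundary term uniformly in $\ell$ using the compact support of the generating Hamiltonian. The decisive point is that the "distortion" $1$-form $d\beta$ in the Stokes computation can be taken to be $dt$ over \emph{all} of $G_\ell$, so no term in the computation lives on the part of the domain whose size grows with $\ell$.

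Concretely, I would first fix a compactly supported Hamiltonian isotopy $\{\psi^t\}_{t\in[0,1]}$ of $X$ with $\psi^t(L)=L_t$, generated by the compactly supported time-dependent Hamiltonian $H_t$, and set $C:=\sup_{t\in[0,1]}\|H_t\|_{C^0(X)}<\infty$. The hypotheses on $G$ give $G_\ell\sub\{z\in\bC:0\le\Im z\le 1\}$, so $\beta\cl G_\ell\to[0,1]$, $\beta(s+it):=t$, is a well-defined smooth function with $d\beta=dt$ on all of $G_\ell$. For $u\in\cP_\ell$ I would set $\tilde u(z):=(\psi^{\beta(z)})^{-1}(u(z))$; this is smooth, and since $u(z)\in L_{\Im z}=\psi^{\Im z}(L)$ for $z\in\pd G_\ell$, the map $\tilde u$ sends $\pd G_\ell$ into $L$. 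As $G_\ell$ is diffeomorphic to a disc, Assumption \ref{relativeexactness} gives $\int_{G_\ell}\tilde u^*\omega=0$.

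Next I would compare $u^*\omega$ with $\tilde u^*\omega$. Writing $u=\Psi\circ(\ide,\tilde u)$ with $\Psi\cl G_\ell\times X\to X$, $\Psi(z,p):=\psi^{\beta(z)}(p)$, and using that each $\psi^t$ is a symplectomorphism with generating vector field $X_{H_t}$, a direct local computation gives $\Psi^*\omega=\pr_X^*\omega-d\beta\wedge\eta$ for an explicit $1$-form $\eta$ built from $dH_t$, which pulls back to
$$u^*\omega \;=\; \tilde u^*\omega \;-\; d\beta\wedge\tau, \qquad \tau(\pd_s)=\tfrac{\pd}{\pd s}\big(H_t(u(s,t))\big)$$
(with $t$ held fixed on the right). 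Since $d\beta=dt$, only $\tau(\pd_s)$ survives the wedge, so integrating and using the previous step yields $\int_{G_\ell}u^*\omega=\int_{G_\ell}\tfrac{\pd}{\pd s}\big(H_t(u(s,t))\big)\,ds\,dt$. Finally, for each $t\in(0,1)$ the slice $G_\ell\cap\{\Im z=t\}$ is a single closed interval $[s_-(t),s_+(t)]$ (by convexity of $G$, hence of each horizontal slice of $G_\ell$) with endpoints on $\pd G_\ell$, so the fundamental theorem of calculus gives
$$\int_{G_\ell}u^*\omega \;=\; \int_0^1\Big(H_t\big(u(s_+(t),t)\big)-H_t\big(u(s_-(t),t)\big)\Big)\,dt,$$
whose absolute value is at most $2C$; since $u$ is pseudoholomorphic, $E(u)=\tfrac12\int_{G_\ell}u^*\omega$ is nonnegative and therefore $\le C$, proving the lemma.

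I expect no genuine obstacle here: the only real labour is the local computation of $\Psi^*\omega$ and keeping track of signs (which is not delicate, since only an upper bound on the nonnegative quantity $E(u)$ is needed). The one point worth stating carefully is the mild claim in the second step that $\tilde u$ genuinely defines a disc $(G_\ell,\pd G_\ell)\to(X,L)$, so that relative exactness applies verbatim; conceptually, the uniformity in $\ell$ is forced by the fact that extending $\beta$ as $\Im z$ over the whole of $G_\ell$ (including the caps $G\pm\ell$) leaves nothing in the Stokes computation supported on the growing part of the domain.
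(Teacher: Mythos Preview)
Your argument is correct and is precisely the standard untwisting/Stokes computation one expects here; the paper does not give its own proof but simply cites Hofer, and this is the argument. Two small remarks: the inclusion $G_\ell\subset\{0\le\Im z\le 1\}$ indeed follows from convexity (each horizontal segment in $\partial G$ forces $G$ to lie on one side of the corresponding supporting line), and the sign in your formula for $u^*\omega-\tilde u^*\omega$ depends on the convention for $X_{H_t}$, but as you note this is irrelevant for the bound.
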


Fix a smooth cutoff function $\rho: \bR \rightarrow [0, 1]$ with
\begin{equation*}\label{}\rho(t) =
\begin{cases}
1 \quad& t \leq  \frac12\\
0 \quad& t \geq  \frac32\end{cases}\end{equation*}
and define for $\ell > 0$ the function $r_\ell: \cF_{\ell, A} \rightarrow \cD_{L, L'}$ by
$$r_\ell(u)(x + iy) := u(\rho(\ell\inv|x|)x+ i y).$$
By construction, $r_\ell(u)$ agrees with $u$ on $Z_{\frac\ell2}$.\\

We require the following result which one should consider as a variant of Gromov compactness.

\begin{prop}[{\cite[Proposition 3]{Ho88}}]\label{P3}
	For any neighbourhood $U$ of $\cM_{L, L'}$ in $\cD_{L, L'}$ and any $A \geq 0$, there exists $\ell_0 > 0$ such that $r_\ell(\cF_{\ell, A}) \subseteq U$ for all $\ell \geq \ell_0$.
\end{prop}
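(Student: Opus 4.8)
The plan is to argue by contradiction via a Gromov-type compactness argument tailored to the truncations $r_\ell$. Suppose the statement fails for some $A\ge 0$ and some open neighbourhood $U$ of $\cM_{L,L'}$ in $\cD_{L,L'}$; then there are $\ell_n\to\infty$ and maps $u_n\in\cF_{\ell_n,A}$ with $r_{\ell_n}(u_n)\notin U$. The aim is to pass to a subsequence along which $u_n\to v$ in $C^\infty_{\mathrm{loc}}(Z,X)$ for some $v\in\cM_{L,L'}$; since $r_{\ell_n}(u_n)$ agrees with $u_n$ on $Z_{\ell_n/2}$ this forces $r_{\ell_n}(u_n)\to v$ in $\cD_{L,L'}$ (recall the weak $C^\infty$ Whitney topology is metrisable and amounts to $C^\infty_{\mathrm{loc}}$ convergence), contradicting $r_{\ell_n}(u_n)\notin U$ for all large $n$ because $U$ is then an open neighbourhood of $v$.

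The heart of the matter is a uniform-in-$n$ a priori estimate for the $u_n$ on compact subsets of the interior of $Z$. Fix a compact $K\subseteq Z$; for $n$ large, $K\subseteq Z_{\ell_n/2}$, so each $u_n$ restricts to a $J$-holomorphic map on a fixed neighbourhood of $K$ with energy at most $A$ and Lagrangian boundary conditions on $L$, resp.\ $L'$, along the two components of $\partial Z$. Since $L$ is relatively exact and $L'$, being Hamiltonian isotopic to $L$, also satisfies $\omega\cdot\pi_2(X,L')=0$, no disc bubbles on $L$ or $L'$ can form, and no sphere bubbles either (via $\pi_2(X)\to\pi_2(X,L)$); when $X$ is Liouville, convexity of $J$ at infinity together with the energy bound confines all images to a fixed compact subset of $X$. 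The standard rescaling argument then yields a uniform gradient bound $\sup_n\sup_K|du_n|<\infty$, and elliptic bootstrapping (interior estimates for $\hpd_J$, together with boundary estimates using the totally real boundary conditions) upgrades this to uniform $C^k$ bounds on $K$ for every $k$. These estimates are obtained just as in Hofer's compactness proofs for $\cM_{L,L'}$ and $\cP_\ell$, \cite[Theorems 1 and 2]{Ho88}, the only new point being that they hold uniformly in $\ell$.

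Given these bounds, Arzel\`a--Ascoli together with a diagonal argument over a compact exhaustion of $Z$ produces a subsequence with $u_n\to v$ in $C^\infty_{\mathrm{loc}}(Z,X)$, where $v\colon Z\to X$ is $J$-holomorphic with $v(\bR)\subseteq L$ and $v(\bR+i)\subseteq L'$. As $u_n^*\omega\ge 0$ pointwise and $\tfrac12\int_{Z_R}u_n^*\omega\le A$ whenever $R\le\ell_n$, passing to the limit gives $\tfrac12\int_{Z_R}v^*\omega\le A$ for every $R$, hence $E(v)\le A<\infty$ by monotone convergence; thus $v\in\cM_{L,L'}$. Along this subsequence, for any fixed compact $K$ we have $r_{\ell_n}(u_n)|_K=u_n|_K\to v|_K$ once $K\subseteq Z_{\ell_n/2}$, so $r_{\ell_n}(u_n)\to v$ in $\cD_{L,L'}$, which is the desired contradiction.

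I expect the main obstacle to be precisely the uniform a priori estimate: one must rule out energy concentration (bubbling) and, in the Liouville case, escape of the curves to infinity in $X$, and this has to hold uniformly over the whole family $\{\cF_{\ell,A}\}_{\ell>0}$ rather than for a single $\ell$. Both phenomena are controlled by relative exactness and the convexity of $J$, and are already built into Hofer's compactness theorems. Everything else---the Arzel\`a--Ascoli extraction, lower semicontinuity of symplectic energy, and the truncation identity $r_\ell(u)|_{Z_{\ell/2}}=u$---is routine bookkeeping. Equivalently, one can package the whole argument as the statement that every sequence $r_{\ell_n}(u_n)$ with $\ell_n\to\infty$ and $u_n\in\cF_{\ell_n,A}$ subconverges in $\cD_{L,L'}$ to a point of $\cM_{L,L'}$, which immediately gives eventual containment in any neighbourhood of $\cM_{L,L'}$.
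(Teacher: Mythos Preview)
The paper does not give its own proof of this proposition; it is quoted directly from \cite[Proposition~3]{Ho88} and used as a black box. Your sketch is exactly the standard Gromov--compactness argument underlying Hofer's result: assume failure, extract a $C^\infty_{\mathrm{loc}}$--convergent subsequence using the uniform energy bound together with the absence of bubbling (relative exactness) and $C^0$--confinement (compactness of $X$ or convexity of $J$ at infinity), and conclude via the identity $r_\ell(u)|_{Z_{\ell/2}}=u$. This is the same route Hofer takes, so there is nothing to contrast.
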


\smallskip

\begin{proof}[Proof of Proposition \ref{Main Theorem 2}]
	Let $C$ be the uniform energy bound from Lemma \ref{Energy Uniformly Bounded}. Any $u$ in any $\cP_\ell$ clearly satisfies $E(u|_{Z_\ell}) \leq C$. Picking $U$ an open neighbourhood of $\cM_{L, L'}$ in $\cD_{L, L'}$, and taking $\ell_0$ as in Proposition \ref{P3} with $A = C$, we obtain a commutative diagram
	\[\begin{tikzcd}
	\cP_{\ell_0} \arrow[r, "\cdot\vert_{Z_{\ell_0}}"]  \arrow[drr, "\pi"]&
	\cF_{\ell_0, C} \arrow[r, "r_{\ell_0}"] &
	U \arrow[d, "\pi"] \\
	& & L\\
	\end{tikzcd}\]
	By Theorem \ref{Main Theorem 1}, the pullback $\pi^*: R^*(L) \rightarrow R^*(U)$ must thus be injective. Using the isomorphism	$$R^*(\cM_{L, L'}) \cong \varinjlim  R^*(U)$$ 
	(taking a direct limit over open neighbourhoods of $\cM_{L, L'}$ in $\cD_{L, L'}$) given by \cite[Lemma 5.2]{AMS21} and the exactness of the direct limit functor, we may conclude.
\end{proof}

\begin{rem} \cite[Lemma 5.2]{AMS21} states that generalised cohomology theories as defined in \textsection \ref{2.1} satisfy the continuity axiom. This is one key ingredient for our generalisation of the results in \cite{Ho88}.
\end{rem}
\section{Lusternik-Schnirelmann theory}\label{Section 4}

Our goal in this section is the proof of Theorem \ref{Intersection Points and Cuplength}.  
Observe that there is a natural $\bR$-action on $\cM_{L, L'}$, by setting $t \cdot u := u(\cdot - t)$. The fixed points of this action are exactly the constant maps to points in $L \cap L'$. Hence there is a bijection between these fixed points and $L \cap L'$. 

\begin{lem}[{\cite[Lemma 4]{Ho88}}]
	There exists a continuous map $\sigma: \cM_{L, L'} \rightarrow \bR$ such that for any $u$ which is not a fixed point of the $\bR$-action, the function $t\mapsto \sigma(t \cdot u)$ is strictly decreasing.
\end{lem}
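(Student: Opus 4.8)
The plan is to take for $\sigma$ a \emph{partial symplectic energy}, which up to an additive constant is minus the symplectic action functional evaluated at the central slice of a strip. Writing $Z^- := (-\infty,0] + i[0,1]$ for the left half of $Z = \bR + i[0,1]$, I would set
$$\sigma(u) := E(u|_{Z^-}) = \tfrac12\int_{Z^-} u^*\omega, \qquad u \in \cM_{L,L'}.$$
Because $u$ is $J$-holomorphic one has $u^*\omega = |\partial_s u|^2\,\mathrm{d}s\wedge\mathrm{d}y$, so the integrand is non-negative and $0 \le \sigma(u) \le E(u) < \infty$; thus $\sigma$ is well-defined and real-valued. (Via the energy--action identity $\sigma$ agrees, up to a constant on each connected component of $\cM_{L,L'}$, with minus the symplectic action of the path $y \mapsto u(iy)$ associated with a Hamiltonian isotopy carrying $L$ to $L'$; this is the conceptual reason the construction works, but only the elementary description will be used.)

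Monotonicity along the flow is then immediate. Set $\mu_u(s) := \tfrac12\int_0^1 |\partial_s u(s+iy)|^2\,\mathrm{d}y \ge 0$, so that $E(u|_{(-\infty,a]+i[0,1]}) = \int_{-\infty}^a \mu_u$ for every $a$. Since $(t\cdot u)(s+iy) = u\big((s-t)+iy\big)$, a change of variables gives $\sigma(t\cdot u) = \int_{-\infty}^{-t}\mu_u$, so $g(t) := \sigma(t\cdot u)$ is $C^1$ with $g'(t) = -\mu_u(-t) \le 0$ and in particular non-increasing. If $g$ failed to be strictly decreasing it would be constant on some interval, forcing $\mu_u \equiv 0$, hence $\partial_s u \equiv 0$, on a non-empty open subset of $Z$; there $u$ depends only on the second coordinate, and the Cauchy--Riemann equation forces it to be locally constant, whence $u$ is globally constant by unique continuation for $J$-holomorphic maps, i.e.\ a fixed point of the $\bR$-action. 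Contrapositively, for $u$ not a fixed point, $t\mapsto\sigma(t\cdot u)$ is strictly decreasing.

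The real work is continuity of $\sigma$ on $\cM_{L,L'}$ with its (metrisable) weak $C^\infty$ topology, i.e.\ $C^\infty$-convergence on compact subsets. With $Z^+ := [0,\infty) + i[0,1]$, each of $E(\cdot|_{Z^-})$ and $E(\cdot|_{Z^+})$ is lower semicontinuous on $\cM_{L,L'}$, being the supremum over $R > 0$ of the continuous functions obtained by integrating $\tfrac12 u^*\omega$ over the compact regions $[-R,0]+i[0,1]$, resp.\ $[0,R]+i[0,1]$ (the integrand depends continuously, uniformly over the region, on the $1$-jet of $u$). Their sum is the total energy $E\colon\cM_{L,L'}\to\bR$, so it is enough to check that $E$ is continuous: then $E(\cdot|_{Z^-}) = E - E(\cdot|_{Z^+})$ is also upper semicontinuous, hence $\sigma$ is continuous. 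Continuity of $E$ is part of the compactness analysis of $\cM_{L,L'}$ in \cite[Theorems 1 and 2]{Ho88}: under Assumption \ref{relativeexactness} there is no disc or sphere bubbling, so that analysis provides a uniform energy bound and uniform exponential energy decay at the strip-like ends of the (compact) space $\cM_{L,L'}$; equivalently $\sup_{u\in\cM_{L,L'}} E(u|_{(-\infty,-R]+i[0,1]}) \to 0$ as $R\to\infty$, and likewise at $+\infty$. One may also view $E$ as locally constant because, by relative exactness, the energy of a finite-energy strip equals the difference of the actions of its two asymptotic points in $L\cap L'$.

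The monotonicity being soft, the only genuine obstacle is this continuity — ruling out loss of symplectic energy to $s=\pm\infty$ along $C^\infty$-convergent sequences in $\cM_{L,L'}$, which is the ``no energy loss'' part of Gromov compactness in this setting. I would extract it from Hofer's compactness theorem rather than reprove it; a direct argument would translate a putative escaping chunk of energy back by the $\bR$-action, pass to a $C^\infty$-limit in the compact space $\cM_{L,L'}$, and derive a contradiction with the uniform energy bound in the absence of bubbling — i.e.\ it reduces to the same compactness input.
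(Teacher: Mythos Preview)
Your $\sigma = E(\cdot|_{Z^-})$ is genuinely different from the paper's choice, and it is \emph{not} continuous on $\cM_{L,L'}$. The paper, following Hofer, takes $\sigma$ to be the action of the central slice: fix a basepoint $u_0$ in each component and set $\sigma(u_1) = \int_{[0,1]^2} v^*\omega$, where $v(s,t) = u_s(it)$ for a path $\{u_s\}$ in $\cM_{L,L'}$ from $u_0$ to $u_1$. Since this depends on $u$ only through $u|_{\{0\}\times[0,1]}$, continuity in the weak $C^\infty$ topology is automatic; relative exactness is used instead to make the integral independent of the chosen path.

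For your half-energy the continuity step fails outright, already in the simplest transverse situation. Take any non-constant $u \in \cM_{L,L'}$ with asymptotics $u(\pm\infty) = p_\pm$. Then $(-n)\cdot u = u(\cdot+n) \to \mathrm{const}_{p_+}$ in $C^\infty_{loc}$, yet
\[
\sigma\bigl((-n)\cdot u\bigr)\;=\;E\bigl(u|_{(-\infty,n]+i[0,1]}\bigr)\;\xrightarrow{\,n\to\infty\,}\;E(u)\;>\;0\;=\;\sigma(\mathrm{const}_{p_+}),
\]
so $\sigma$ jumps at the constant strip $p_+$. The total energy $E$ is discontinuous for the same reason (it equals $E(u)$ along the orbit and $0$ at the limit), so your reduction to continuity of $E$ cannot succeed: there is no uniform decay at the ends over all of $\cM_{L,L'}$, and the endpoint map $u\mapsto u(-\infty)$ is itself discontinuous at $\mathrm{const}_{p_+}$, so neither of your proposed justifications applies. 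Your sketched ``translate the escaping energy back and derive a contradiction'' argument here simply recovers $u$ itself and gives no contradiction with the uniform energy bound. The paper's construction sidesteps all of this by reading $\sigma$ off a compact window of the strip.
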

\begin{proof}[Sketch of the construction]  
	One should think of $\sigma$ as something akin to the Floer action functional. Indeed, if $X$ is Liouville and $L$ is an exact Lagrangian, we can take $\sigma$ to be the usual Floer action functional.\par 
	If not, for each path component $Q$ in $\cM_{L, L'}$, we fix a basepoint $u_0$ in $Q$, and define $\sigma(u_0) := 0$. Then for some other $u_1$ in $Q$, we pick a path $\{u_t\}_{t \in [0, 1]}$ from $u_0$ to $u_1$, and define 
	$$\sigma(u_1) = \int_{[0, 1]^2} v^* \omega$$
	where $v: [0, 1]^2 \rightarrow X$ is a smoothing (rel endpoints) of the map sending $(s, t)$ to $u_s(t i)$. This is well-defined due to relative exactness.
\end{proof}

Fix some basepoint $x_0\in L$. For any subset $S$ of $\cM_{L, L'}$ or $\cD_{L, L'}$, we consider the map of pairs 
$$\pi_S: (S, \emptyset) \rightarrow (L,x_0) : u \mapsto u(0),$$ 
as well as the pullback 
$$\pi_S^*: R^*(L,x_0) \rightarrow R^*(S).$$
\begin{de}
	To each subset $S$ of $\cM_{L, L'}$, we assign the non-negative integer 
	$$I(S) := \min\set{k \geq 1: \exists\; U_1, \ldots, U_k\sub \cM_{L, L'}\normalfont\text{ open } : S\sub U_1\cup\dots \cup U_k\normalfont\text{ and }  \pi^*_{U_i} = 0}.$$
\end{de}

Note $I$ has a uniform upper bound. Indeed, let $N$ be the  minimal number of contractible open subsets of $L$ required to cover $L$. Then  $I(S) \leq N$ for any $S\sub \cM_{L, L'}$.\par

\begin{lem}\label{Index Function}
	Fix subsets $S$ and $T$ of $\cM_{L, L'}$.
	\begin{enumerate}
	    \item If $S \subseteq T$, then $I(S) \leq I(T)$.
		\item There is some open neighbourhood $U$ of $S$ such that $I(S) = I(U)$.
		\item $I(S \cup T) \leq I(S) + I(T)$.
		\item If $\{\varphi_t\}_{t \in\bR}$ is a flow on $\cM_{L,L'}$, then $I(S) = I(\varphi_t(S))$ for all $t\in\bR$.
		\item\label{finite} $I(\{u_1,\dots,u_n\}) = 1$ for any $u_1,\dots,u_n\in \cM_{L,L'}.$
	\end{enumerate}
\end{lem}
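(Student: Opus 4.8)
The plan is to verify the five assertions in turn; all are standard properties of a Lusternik--Schnirelmann type index, and only (5) carries any real content. Throughout write $\pi\cl\cM_{L,L'}\to L$ for evaluation at $0$, so that $\pi_S=\pi|_S$ for $S\sub\cM_{L,L'}$ and $\pi_S^*$ is its pullback; recall that $I$ is finite-valued (by the bound $I(\cdot)\le N$ noted above), so the minima in the definition are attained. Parts (1) and (3) are immediate: any open cover of $T$ by sets with vanishing pullback restricts to such a cover of any $S\sub T$, which gives (1), while concatenating minimal such covers of $S$ and of $T$ gives one of $S\cup T$ of size $I(S)+I(T)$, which gives (3). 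For (2), let $U_1,\dots,U_k$ be a minimal such cover of $S$, $k=I(S)$, and set $U:=U_1\cup\dots\cup U_k$; the same cover shows $I(U)\le k$, while $S\sub U$ together with (1) gives $I(S)\le I(U)$, so $I(U)=I(S)$.

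For (4), fix $t$ and note $\varphi_t$ is a homeomorphism of $\cM_{L,L'}$ with inverse $\varphi_{-t}$. The key point is that $\pi$ and $\pi\circ\varphi_t$ are homotopic via $H\cl\cM_{L,L'}\times[0,1]\to L$, $H(u,r):=\varphi_{rt}(u)(0)$; this is well defined because every $v\in\cM_{L,L'}$ satisfies $v(0)\in v(\bR)\sub L$. Given an open cover $U_1,\dots,U_k$ of $S$ with $\pi_{U_i}^*=0$, the sets $\varphi_t(U_i)$ form an open cover of $\varphi_t(S)$, and restricting $H$ to $U_i$ shows $\pi_{U_i}\simeq \pi_{\varphi_t(U_i)}\circ\varphi_t|_{U_i}$; since $\varphi_t|_{U_i}\cl U_i\to\varphi_t(U_i)$ is a homeomorphism, $0=\pi_{U_i}^*=(\varphi_t|_{U_i})^*\circ\pi_{\varphi_t(U_i)}^*$ forces $\pi_{\varphi_t(U_i)}^*=0$. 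Hence $I(\varphi_t(S))\le I(S)$ for all $S$ and all $t$; applying this with $-t$ in place of $t$ and $\varphi_t(S)$ in place of $S$ yields $I(S)\le I(\varphi_t(S))$, hence equality.

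For (5), since $I\ge 1$ by definition it suffices to exhibit one open set covering $\{u_1,\dots,u_n\}$ on which $\pi^*$ vanishes. Discarding repetitions, assume $u_1,\dots,u_n$ distinct; as $\cM_{L,L'}$ is metrisable, choose pairwise disjoint open sets $D_1,\dots,D_n$ with $u_j\in D_j$. For each $j$ pick a contractible coordinate ball $B_j\sub L$ around $\pi(u_j)=u_j(0)$, so the inclusion $B_j\hkra L$ is null-homotopic (a coordinate ball contracts to its centre inside itself). Set $U_j:=D_j\cap\pi^{-1}(B_j)$, an open neighbourhood of $u_j$. Then $\pi|_{U_j}$ factors through $B_j\hkra L$ and so is null-homotopic, whence $\pi_{U_j}^*=0$ (using that $L$ is connected, so the target basepoint is immaterial). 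The $U_j$ being pairwise disjoint, $U:=U_1\sqcup\dots\sqcup U_n$ is open, contains every $u_j$, and the component inclusions induce an isomorphism $R^*(U)\cong\prod_{j=1}^n R^*(U_j)$ carrying $\pi_U^*$ to $(\pi_{U_1}^*,\dots,\pi_{U_n}^*)=0$. Hence $\pi_U^*=0$ and $I(\{u_1,\dots,u_n\})=1$.

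The arguments are all routine; the one point requiring a little care is that $\cM_{L,L'}$ need not be a manifold — the relevant moduli space may fail to be cut out transversely — so one cannot simply take small contractible neighbourhoods inside $\cM_{L,L'}$. The remedy, used in (5), is to import contractibility from $L$ through the continuous evaluation map. Apart from that, the proof only invokes the formal facts that a null-homotopic map into $L$ kills the reduced group $R^*(L,x_0)$ and that $R^*$ sends finite disjoint unions to products.
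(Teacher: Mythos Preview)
Your proof is correct and follows essentially the same approach as the paper's. The only minor variation is in part (5): the paper separates the \emph{distinct images} $p_1,\dots,p_k$ in $L$ by pairwise disjoint contractible open sets $V_j$ and takes the single set $U=\pi^{-1}(V_1\cup\dots\cup V_k)$, whereas you first separate the $u_j$ inside $\cM_{L,L'}$ and then intersect with preimages of (possibly overlapping) balls in $L$. Both yield a single open set on which $\pi^*$ vanishes; your extra separation step in $\cM_{L,L'}$ is unnecessary but harmless, and your closing remark about why one cannot simply take contractible neighbourhoods in $\cM_{L,L'}$ is a nice clarification.
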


Thus $I$ is an \emph{index function} in the sense of \cite[Definition 4.2]{Rud99}.

\begin{proof} If $S\leq T$, we take the minimum over a larger set, so the inequality is immediate. If $U_1, \ldots, U_{I(S)}$ are open subsets of $\cM_{L, L'}$ covering $S$ with $\pi^*_{U_i} = 0$ for all $i$, set 
		$$U = U_1 \cup \ldots \cup U_{I(S)}$$
		Then $I(U)\leq I(S)$, so equality holds by (1).
		The union of two suitable open covers for $S$, respectively $T$ defines a suitable open cover for $S\cup T$ which must have cardinality at least $I(S\cup T)$. This shows 3. As $\varphi_t$ is homotopic to the identity, it takes a suitable cover for $S$ to a suitable cover for $\varphi_t(S)$. Thus $I(\varphi_t(S)) \leq I(S)$, and equality follows from applying the same argument to $\varphi_{-t}(\varphi_t(S))$.
		To see the last claim, denote $\{p_1,\dots,p_k\} = \{u_1(0),\dotsc,u_n(0)\}$. For each $j \leq k$ choose a contractible open neighbourhood $V_j$ of $p_j$ in $L$, such that $\cc{V_i}\cap \cc{V_j}  =\emst$ for $i \neq j$. Then the preimage $U$ = $\pi^{-1}(V_1 \cup \dots \cup V_k)$ defines a suitable cover for $\{u_1,\dots,u_n\}$.
\end{proof}

\begin{lem}\label{Index Function and Cuplength}
	$I(\cM_{L, L'}) \geq c_R(L)$.
\end{lem}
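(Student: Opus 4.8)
The plan is to show that $k := I(\cM_{L,L'})$ satisfies $c_R(L) \le k$, i.e.\ that every $k$-fold product of reduced $R$-cohomology classes of $L$ vanishes. First I would unwind the definition of $I$: there is an open cover $\cM_{L,L'} = U_1 \cup \dots \cup U_k$ with each pullback $\pi^*_{U_i}\colon R^*(L,x_0) \to R^*(U_i)$ equal to zero. Fix classes $\alpha_1, \dots, \alpha_k \in \tilde R^*(L)$; by bilinearity of the cup product it is enough to treat homogeneous $\alpha_i \in R^{n_i}(L, x_0) \subseteq R^{n_i}(L)$. Writing $\pi\colon \cM_{L,L'} \to L$ for evaluation at $0$ and $\beta_i := \pi^*\alpha_i \in R^{n_i}(\cM_{L,L'})$, the factorisation $U_i \hookrightarrow \cM_{L,L'} \xrightarrow{\pi} L$ of $\pi_{U_i}$ translates the hypothesis $\pi^*_{U_i} = 0$ into the statement that $\beta_i|_{U_i} = 0$ in $R^{n_i}(U_i)$.

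Next I would feed this into Lemma \ref{rel cup}. The space $\cM_{L,L'}$ is compact by \cite[Theorems 1 and 2]{Ho88} and is metrisable (for the weak $C^\infty$ Whitney topology, via the Nash embedding), so the lemma applies with ambient space $X = \cM_{L,L'}$, open cover $U_1, \dots, U_k$, and classes $\beta_1, \dots, \beta_k$, yielding $\beta_1 \cdots \beta_k = 0$ in $R^*(\cM_{L,L'})$. Since $\pi^*$ is a ring homomorphism for the cup product, this reads $\pi^*(\alpha_1 \cdots \alpha_k) = 0$, so Proposition \ref{Main Theorem 2} (the injectivity of $\pi^*$) forces $\alpha_1 \cdots \alpha_k = 0$ in $R^*(L)$; as this product lies in the summand $\tilde R^*(L) = R^*(L, x_0)$, on which the reduced and unreduced cup products agree, we conclude $\alpha_1 \cdots \alpha_k = 0$ in $\tilde R^*(L)$. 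Hence $c_R(L) \le k = I(\cM_{L,L'})$.

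There is no serious obstacle here: the argument is an assembly of Proposition \ref{Main Theorem 2}, Lemma \ref{rel cup}, and Hofer's compactness of $\cM_{L,L'}$. The one point to handle with care is the bookkeeping needed to align three a priori different notions of ``vanishing'': the index function $I$ is phrased via the pullback maps $\pi^*_{U_i}$ being identically zero, Lemma \ref{rel cup} wants the restrictions of the fixed classes $\beta_i$ to $U_i$ to be zero, and $c_R$ concerns products in \emph{reduced} cohomology. One should also double-check that Hofer's compactness statement supplies exactly the ``compact metric space'' hypothesis of Lemma \ref{rel cup}, and that reducing from arbitrary to homogeneous classes in the definition of $c_R$ is harmless (which it is, by distributivity of the cup product).
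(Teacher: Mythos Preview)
Your proposal is correct and follows essentially the same route as the paper: choose an open cover realizing $I(\cM_{L,L'})$, apply Lemma~\ref{rel cup} to the pulled-back classes, and invoke the injectivity of $\pi^*$ (Proposition~\ref{Main Theorem 2}) to conclude $c_R(L)\le k$. Your write-up is in fact slightly more careful than the paper's, which cites Theorem~\ref{Main Theorem 1} where Proposition~\ref{Main Theorem 2} is meant, and you explicitly verify the compact-metric hypothesis of Lemma~\ref{rel cup}.
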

\begin{proof}
    Fix an open cover $U_1, \ldots, U_k$ of $\cM_{L, L'}$ such that $\pi^*_{U_i} = 0$ for $i\leq k$ and let $\alpha_1,\dots,\alpha_k\in R^*(L,x_0)$ be arbitrary. By Lemma \ref{rel cup} the product $\pi_{\cM_{L, L'}}^* \alpha_1 \cdot \ldots \cdot \pi_{\cM_{L, L'}}^* \alpha_k$ vanishes in $R^*(\cM_{L, L'})$. By Theorem \ref{Main Theorem 1}, $\pi_{\cM_{L, L'}}^*$ is injective, so $\alpha_1 \cdot \ldots \cdot \alpha_k = 0$ in $R^*(L, x_0)$ and $c_R(L) \leq k$. Taking the infimum over all such open covers completes the proof.
\end{proof}

Given Lemma \ref{Index Function and Cuplength}, the proof of Theorem \ref{Intersection Points and Cuplength} reduces to showing that
\begin{equation}\label{Star} \# L \cap L' \geq I(\cM_{L, L'})\end{equation}

Since $I$ is an index function, this follows from \cite[Theorem 4.2]{Rud99}. For the sake of exposition, we give a proof here, using standard Lusternik-Schnirelman theory as in \cite[\textsection~V]{Ho88}.

\begin{de}
	For $1 \leq i \leq I(\cM_{L, L'})$, we define
	$$d_i := \inf_{I(S) \geq i} \sup \sigma(S)$$
	where the infimum is taken over subsets of $\cM_{L, L'}$.\par 
	For any $d\in \bR$, we denote
	$$Cr(d) := \set{u \in \cM_{L, L'} : \sigma(u) = d,\; \bR \cdot u = u }.$$
\end{de}

It follows that

$$\sum_d \# Cr(d) = \# L \cap L'.$$

\begin{lem}
	$$-\infty < d_1 \leq \ldots \leq d_{I(\cM_{L, L'})} < \infty.$$
\end{lem}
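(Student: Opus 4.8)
The plan is to verify the three separate assertions packaged into this inequality: the monotonicity $d_i \le d_{i+1}$, the finite upper bound $d_{I(\cM_{L,L'})} < \infty$, and the finite lower bound $d_1 > -\infty$. All three are elementary, the only substantive input being that $\cM_{L,L'}$ is compact (by \cite[Theorems 1 and 2]{Ho88}) and that $\sigma$ is continuous, so that $\sigma$ is bounded on $\cM_{L,L'}$ and attains its extrema there.

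For monotonicity, I would observe that for $i \le j$ the collection $\{S \subseteq \cM_{L,L'} : I(S) \ge j\}$ is contained in $\{S \subseteq \cM_{L,L'} : I(S) \ge i\}$, so the infimum defining $d_j$ is taken over a subfamily of the one defining $d_i$; hence $d_i \le d_j$. For the upper bound, note that $S = \cM_{L,L'}$ itself trivially satisfies $I(S) \ge I(\cM_{L,L'})$, so it is admissible in the infimum defining $d_{I(\cM_{L,L'})}$, whence $d_{I(\cM_{L,L'})} \le \sup\sigma(\cM_{L,L'}) < \infty$.

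For the lower bound, recall that the infimum defining $d_i$ ranges over nonempty subsets $S$ of $\cM_{L,L'}$ (equivalently, $I(\emptyset) := 0$, so $\emptyset$ never satisfies $I(S) \ge 1$); then every admissible $S$ satisfies $\sup\sigma(S) \ge \min\sigma(\cM_{L,L'})$, and this minimum is finite by compactness of $\cM_{L,L'}$ and continuity of $\sigma$. Taking the infimum over all such $S$ gives $d_1 \ge \min\sigma(\cM_{L,L'}) > -\infty$. I do not anticipate any genuine difficulty; the one point worth a sentence of care is exactly this nonemptiness convention, which — together with the uniform upper bound on $I$ noted earlier — is also what keeps the index range $1 \le i \le I(\cM_{L,L'})$ finite and nonempty.
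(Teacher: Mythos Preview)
Your proof is correct and follows exactly the same approach as the paper: monotonicity because the infimum is over a smaller family as $i$ increases, and both finiteness bounds from the compactness of $\cM_{L,L'}$ together with the continuity of $\sigma$. The paper is simply terser, but there is no substantive difference.
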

\begin{proof} First observe that
	$d_j \leq d_{j + 1}$ for all $j$ since we take the infimum over a smaller set. The compactness of $\cM_{L, L'}$ implies that 
	$-\infty < d_1$ and $d_{I(\cM_{L, L'})} < \infty$.
\end{proof}

\begin{lem}\label{ccc}
	For any neighbourhood $U$ of $Cr(d)$, there exists some $\varepsilon > 0$ such that
	$$u \in \sigma^{-1}((-\infty, d + \varepsilon])\setminus U\quad \rimp\quad 1\cdot u\in \sigma^{-1}((-\infty, d - \varepsilon]).$$
\end{lem}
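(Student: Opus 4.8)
The plan is a standard compactness-and-contradiction argument that exploits the fact that $\sigma$ is \emph{strictly} decreasing along non-constant orbits. Suppose the statement fails for the given neighbourhood $U$ of $Cr(d)$. Then for each $n \in \bN$ I can choose $u_n \in \sigma^{-1}((-\infty, d + \frac{1}{n}]) \setminus U$ with $1 \cdot u_n \notin \sigma^{-1}((-\infty, d - \frac{1}{n}])$, i.e. $\sigma(1 \cdot u_n) > d - \frac{1}{n}$. So I have a sequence in the closed set $\cM_{L,L'} \setminus U$ whose $\sigma$-values converge to $\leq d$ from above and whose time-$1$ translates have $\sigma$-values bounded below by $d$ in the limit.

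Next I would invoke the compactness (and metrisability) of $\cM_{L, L'}$, established by Hofer in \cite[Theorems 1 and 2]{Ho88}: after passing to a subsequence, $u_n \to u_*$ for some $u_* \in \cM_{L,L'}$. Since $\cM_{L,L'}\setminus U$ is closed and contains every $u_n$, the limit satisfies $u_* \notin U$, hence in particular $u_* \notin Cr(d)$. Continuity of $\sigma$ gives $\sigma(u_*) \leq d$; continuity of the $\bR$-action gives $1 \cdot u_n \to 1 \cdot u_*$, and applying continuity of $\sigma$ once more gives $\sigma(1 \cdot u_*) \geq d$. On the other hand, the monotonicity property of $\sigma$ from the preceding lemma yields $\sigma(1\cdot u)\le \sigma(u)$ for every $u$, so
$$d \;\leq\; \sigma(1 \cdot u_*) \;\leq\; \sigma(u_*) \;\leq\; d,$$
which forces $\sigma(1 \cdot u_*) = \sigma(u_*) = d$.

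Finally I would use the \emph{strictness} of the monotonicity: since $t \mapsto \sigma(t \cdot u_*)$ is strictly decreasing whenever $u_*$ is not a fixed point of the $\bR$-action, the equality $\sigma(1 \cdot u_*) = \sigma(u_*)$ forces $\bR \cdot u_* = u_*$. Combined with $\sigma(u_*) = d$, this means $u_* \in Cr(d) \subseteq U$, contradicting $u_* \notin U$. The only step that requires genuine care is this last one — checking that the limiting orbit is actually constant rather than merely achieving the value $d$ — and it is precisely here that one needs the strict (not merely weak) monotonicity of $\sigma$ along non-constant orbits; the remainder is routine continuity and compactness bookkeeping.
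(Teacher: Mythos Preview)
Your proof is correct and uses exactly the same ingredients the paper invokes---compactness of $\cM_{L,L'}$ (hence of closed subsets), continuity of $\sigma$ and of the $\bR$-action, and the strict monotonicity of $\sigma$ along non-constant orbits. The paper's proof is the single sentence ``This follows from the compactness of $\sigma^{-1}((-\infty, d]) \setminus U$ along with the continuity of the $\bR$-action''; your sequential-compactness contradiction argument is precisely the standard way to unpack that sentence, and you have correctly identified that strict (not merely weak) monotonicity is what forces the limit $u_*$ to lie in $Cr(d)$.
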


\begin{proof}
	This follows from the compactness of $\sigma^{-1}((-\infty, d]) \setminus U$ along with the continuity of the $\bR$-action.
\end{proof}

\begin{lem}\label{aaa}
	$Cr(d_j)$ is non-empty for all $j$.
\end{lem}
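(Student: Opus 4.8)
The plan is to run the standard Lusternik--Schnirelmann minimax argument by contradiction, with the role of a negative gradient flow played by the $\bR$-action $\{\varphi_t\}$ on $\cM_{L,L'}$ given by $\varphi_t(u) = t\cdot u = u(\cdot - t)$. Suppose that $Cr(d_j) = \emptyset$ for some $j$. Since the empty set is an open neighbourhood of $Cr(d_j)$, Lemma \ref{ccc} applied with $U = \emptyset$ produces $\varepsilon > 0$ such that
$$ u \in \sigma^{-1}((-\infty, d_j + \varepsilon]) \quad \Longrightarrow \quad 1\cdot u \in \sigma^{-1}((-\infty, d_j - \varepsilon]); $$
that is, $\varphi_1$ maps the closed sublevel set $\sigma^{-1}((-\infty, d_j + \varepsilon])$ into $\sigma^{-1}((-\infty, d_j - \varepsilon])$.

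Next I would invoke the definition $d_j = \inf_{I(S)\geq j}\sup\sigma(S)$, which is a finite infimum over a non-empty family (as $I(\cM_{L,L'}) \geq j$), to choose a subset $S \subseteq \cM_{L,L'}$ with $I(S) \geq j$ and $\sup\sigma(S) \leq d_j + \tfrac{\varepsilon}{2} < d_j + \varepsilon$, so that $S \subseteq \sigma^{-1}((-\infty, d_j + \varepsilon])$. Applying the deformation above, $\varphi_1(S) \subseteq \sigma^{-1}((-\infty, d_j - \varepsilon])$, hence $\sup\sigma(\varphi_1(S)) \leq d_j - \varepsilon$. On the other hand $\varphi_1$ is homotopic to the identity through the flow $\{\varphi_t\}_{t\in[0,1]}$, so Lemma \ref{Index Function}(4) gives $I(\varphi_1(S)) = I(S) \geq j$. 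Therefore $\varphi_1(S)$ is admissible in the infimum defining $d_j$, which forces $d_j \leq \sup\sigma(\varphi_1(S)) \leq d_j - \varepsilon$, a contradiction. Hence $Cr(d_j)$ is non-empty.

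I do not anticipate a genuine obstacle here: all of the analytic content (compactness of $\cM_{L,L'}$, continuity of $\sigma$ and of the $\bR$-action, and strict decrease of $\sigma$ along non-constant orbits) is already packaged into Lemmas \ref{ccc} and \ref{Index Function}. The only points needing a touch of care are choosing $S$ with the \emph{strict} inequality $\sup\sigma(S) < d_j + \varepsilon$ so that $S$ genuinely lies in the closed sublevel set $\sigma^{-1}((-\infty,d_j+\varepsilon])$, and observing that the $\bR$-action qualifies as a flow for the purposes of Lemma \ref{Index Function}(4), which is immediate from its definition.
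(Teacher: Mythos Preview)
Your proof is correct and follows essentially the same approach as the paper's: assume $Cr(d_j)=\emptyset$, apply Lemma~\ref{ccc} with $U=\emptyset$ to push a near-optimal test set $S$ strictly below level $d_j$ via the time-$1$ map of the $\bR$-action, and contradict the definition of $d_j$ using the flow-invariance of $I$ from Lemma~\ref{Index Function}. Your extra care in taking $\sup\sigma(S)\leq d_j+\varepsilon/2$ is harmless but unnecessary, since the sublevel set in Lemma~\ref{ccc} is closed.
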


\begin{proof}
	Suppose $Cr(d_j)$ is empty. Applying Lemma \ref{ccc} to $U = \emst$ we obtain some $\varepsilon > 0$ such that
	$$1 \cdot\sigma\inv((-\infty, d_j + \varepsilon]) \subseteq \sigma^{-1}((-\infty, d_j - \varepsilon]).$$
	By definition of $d_j$, there exists $S \subseteq \cM_{L, L'}$ such that $I(S) \geq j$ and 
	$$ d_j \leq \sup \sigma(S) \leq d_j + \varepsilon.$$
	But then $I(1 \cdot S) \geq j$ and $\sup \sigma(1 \cdot S) < d_j$, which is a contradiction.
\end{proof}

\begin{lem}\label{bbb}
	If $d_j = d_{j + 1}$ for any $j$, then $Cr(d_j)$ is infinite.
\end{lem}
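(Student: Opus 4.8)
The plan is to argue by contradiction in the standard Lusternik--Schnirelmann fashion. Set $d := d_j = d_{j+1}$ and suppose for contradiction that $Cr(d)$ were finite. By Lemma \ref{aaa} it is non-empty, so item \ref{finite} of Lemma \ref{Index Function} gives $I(Cr(d)) = 1$. Using part (2) of Lemma \ref{Index Function}, I would pick an open neighbourhood $U$ of $Cr(d)$ in $\cM_{L,L'}$ with $I(U) = 1$, and then apply Lemma \ref{ccc} to this $U$ to obtain an $\varepsilon > 0$ such that every $u \in \sigma^{-1}((-\infty, d + \varepsilon]) \setminus U$ satisfies $1 \cdot u \in \sigma^{-1}((-\infty, d - \varepsilon])$.

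Next I would extract a near-optimal minimax set at the level $d_{j+1} = d$: by definition of $d_{j+1}$ there is some $S \subseteq \cM_{L,L'}$ with $I(S) \geq j+1$ and $\sup \sigma(S) \leq d + \varepsilon$. The crucial step is to delete the neighbourhood $U$ of the critical set while controlling the loss of index. Since $S \subseteq (S \setminus U) \cup U$, monotonicity and subadditivity (parts (1) and (3) of Lemma \ref{Index Function}) give
\[ j + 1 \leq I(S) \leq I(S \setminus U) + I(U) = I(S \setminus U) + 1, \]
so $I(S \setminus U) \geq j$. Now let $\varphi_t$ denote the flow $\varphi_t(u) = t \cdot u$. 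As $S \setminus U \subseteq \sigma^{-1}((-\infty, d + \varepsilon]) \setminus U$, Lemma \ref{ccc} gives $\varphi_1(S \setminus U) \subseteq \sigma^{-1}((-\infty, d - \varepsilon])$, and hence $\sup \sigma(\varphi_1(S \setminus U)) \leq d - \varepsilon$. On the other hand, flow-invariance of the index (part (4) of Lemma \ref{Index Function}) gives $I(\varphi_1(S \setminus U)) = I(S \setminus U) \geq j$. Thus $\varphi_1(S \setminus U)$ is a subset of index at least $j$ whose $\sigma$-supremum is strictly below $d = d_j$, contradicting the definition of $d_j$ as the infimum of $\sup \sigma$ over subsets of index at least $j$. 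Hence $Cr(d)$ is infinite.

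The whole argument is formal once Lemmas \ref{ccc}, \ref{aaa} and \ref{Index Function} are available, so I do not anticipate a serious obstacle. The one point demanding slight care is the index bookkeeping $I(S \setminus U) \geq j$: one must note that $S \cap U \subseteq U$, so that $S = (S \setminus U) \cup (S \cap U) \subseteq (S \setminus U) \cup U$, even though $S \cap U$ itself need not be open; this is precisely why subadditivity in Lemma \ref{Index Function} is phrased for arbitrary subsets rather than for open covers. Everything else is a direct substitution into the quoted lemmas.
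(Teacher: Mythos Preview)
Your proof is correct and follows essentially the same Lusternik--Schnirelmann argument as the paper. The only cosmetic difference is that the paper asserts $I(1\cdot(S\setminus U))\geq j$ in one line, whereas you spell out the chain of inequalities $I(S)\leq I((S\setminus U)\cup U)\leq I(S\setminus U)+I(U)$ using monotonicity and subadditivity, and then invoke flow-invariance separately; this extra bookkeeping is exactly what the paper's assertion rests on.
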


\begin{proof}
	If $Cr(d_j)$ is finite, then $I(Cr(d_j)) = 1$ by Lemma \ref{Index Function}.\eqref{finite}. So it suffices to show that $I(Cr(d_j)) \geq 2$.
	Suppose by contradiction $I(Cr(d_j)) \leq 1$. Since $Cr(d_j)$ is non-empty, we must have equality. Then there is some open neighbourhood $U$ of $Cr(d_j)$ such that $\pi^*_{U} = 0$. Given this $U$, fix $\varepsilon > 0$ as in the statement of Lemma \ref{ccc}.\par 
	Choose $S \subseteq \cM_{L, L'}$ such that $I(S) \geq j + 1$ and 
	$$d_j \leq \sup \sigma(S) \leq d_j + \varepsilon.$$
	Then $I(1 \cdot (S \setminus U)) \geq j$ but $\sigma(1 \cdot (S \setminus U)) \leq d_j - \varepsilon$, a contradiction.
\end{proof}

The inequality in \eqref{Star}, and hence Theorem \ref{Intersection Points and Cuplength}, follows from Lemmas \ref{aaa} and \ref{bbb}.
\section{Two examples}\label{Examples}

Let
$$\Sp(n) := \normalfont\text{Sp}(2n;\bC)\cap U(2n)$$ be the \emph{compact symplectic group}. It is a compact simply-connected Lie group of dimension $n(2n+1)$. The zero section defines a Lagrangian embedding $\Sp(n)\hkra T^*\Sp(n)$, where we endow $T^*\Sp(n)$ with the canonical symplectic structure. As this embedding is a homotopy equivalence, $\Sp(n)$ is relatively exact. 
We will consider $\Sp(2)$ and $\Sp(3)$ since their cuplength with respect to a certain generalised cohomology theory was computed in \cite{IM04} (see also \cite{Ki07}) and is strictly greater than their cuplength with respect to integral cohomology.

\begin{prop}[\cite{IMN03, IM04}] \label{computations}
    The mod-2 and integral cuplengths of $\Sp(2)$ are
    $$c_{\bZ/2}(\Sp(2)) = c_\bZ(\Sp(2)) = 3,$$
    while 
    $$c_{h^*}(\Sp(2)) = 4$$
    where $h^*$ is the cohomology theory associated to the truncated sphere spectrum $\bS[0, 2]$. In particular, \cite{IM04} shows that its cuplength in real $K$-theory is
    $$c_{KO}(\Sp(2)) = 4.$$
    Similarly the cuplengths of $\Sp(3)$ in the same cohomology theories are given by
    $$c_{\bZ/2}(\Sp(3)) = c_\bZ(\Sp(3)) = 4$$
    and
    $$c_{h^*}(\Sp(3)) = c_{KO}(\Sp(3)) = 5.$$
\end{prop}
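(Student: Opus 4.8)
The plan is to reduce the statement to known homotopy-theoretic facts about $\Sp(n)$. First dispose of the ordinary coefficients. The compact symplectic group has $H^*(\Sp(n);\bZ)\cong\Lambda_\bZ(y_3,y_7,\dots,y_{4n-1})$, a torsion-free exterior algebra on $n$ generators of degrees $3,7,\dots,4n-1$, so by the universal coefficient theorem $H^*(\Sp(n);\bZ/2)\cong\Lambda_{\bZ/2}(y_3,\dots,y_{4n-1})$ as well. In an exterior algebra on $n$ generators of positive degree the product $y_3\cdots y_{4n-1}$ of $n$ elements of the augmentation ideal is nonzero, while any product of $n+1$ elements of the augmentation ideal has word length $>n$ and hence vanishes; in Hofer's convention for the cuplength this gives $c_\bZ(\Sp(n))=c_{\bZ/2}(\Sp(n))=n+1$. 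Taking $n=2,3$ yields the values $3$ and $4$.

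The identities $c_{h^*}(\Sp(n))=n+2$ for $n=2,3$ are the substantive part, and they are the content of \cite{IMN03,IM04} (see also \cite{Ki07}); I only sketch the shape of the argument. The truncated sphere spectrum $\bS[0,2]$ has $\pi_0=\bZ$ and $\pi_1=\pi_2=\bZ/2$, generated by $\eta$ and $\eta^2$. In the multiplicative Atiyah--Hirzebruch spectral sequence for $h^*(\Sp(n))$ all differentials vanish, since the relevant Steenrod operations on $H^*(\Sp(n))$ are trivial for degree reasons, and one obtains $E_\infty^{*,*}\cong H^*(\Sp(n);\bZ)\otimes_\bZ\bZ[\eta]/(2\eta,\eta^3)$ as a bigraded ring. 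A direct check shows that this associated graded ring has $h^*$-cuplength only $n+1$: any product of $n+2$ elements of its augmentation ideal is forced to contain a factor $y_i^2$ or $\eta^3$. The crux of \cite{IM04} is that $h^*(\Sp(n))$ nonetheless carries \emph{hidden multiplicative extensions} --- for instance the square of a lift $\tilde y_3$ of $y_3$ is a nonzero $\eta$-multiple rather than $0$ --- and that for $n=2,3$ these extensions combine into a nonvanishing product of length $n+1$, one longer than any nonzero product in the associated graded, while (again in \cite{IM04}) every product of length $n+2$ vanishes. This gives $c_{h^*}(\Sp(2))=4$ and $c_{h^*}(\Sp(3))=5$.

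Finally, the real $K$-theory statements. The unit $\bS\to KO$ induces an isomorphism $\pi_i\bS\cong\pi_i KO$ for $i\le 2$ compatible with $\eta$-multiplication, so the Postnikov truncation $\tau_{\le 2}KO$ is $\bS[0,2]$ as a ring spectrum; the hidden extensions above already occur in $KO^*(\Sp(n))$ in the relevant range, and the $KO$-Atiyah--Hirzebruch spectral sequence likewise kills all products of length $n+2$ (its low-degree portion agrees with that of $\bS[0,2]$). This is how \cite{IM04} obtains $c_{KO}(\Sp(2))=4$ and $c_{KO}(\Sp(3))=5$.

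The only routine input is the exterior-algebra computation for $\bZ$ and $\bZ/2$ coefficients. The genuine obstacle, which we do not reprove and for which we refer to \cite{IMN03,IM04,Ki07}, is the identification of the hidden multiplicative extensions in $h^*(\Sp(n))$ and $KO^*(\Sp(n))$ for $n=2,3$, i.e.\ the nonvanishing of the length-$(n+1)$ products together with the vanishing of all length-$(n+2)$ ones --- the associated graded of the Atiyah--Hirzebruch filtration already has strictly smaller cuplength and so gives no information beyond $n+1$.
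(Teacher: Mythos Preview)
Your proposal is correct and aligns with the paper's treatment: the paper does not prove this proposition either but cites \cite{IMN03,IM04} for the $h^*$ and $KO$ computations, and separately recaps the integral case in Lemma~\ref{Z_2 cuplength of Sp(2)} via the Leray--Serre spectral sequence for $\Sp(n-1)\to\Sp(n)\to S^{4n-1}$ to obtain $H^*(\Sp(n);A)\cong\Lambda_A(y_3,\dots,y_{4n-1})$. Your direct invocation of the exterior algebra structure is equivalent and slightly more economical, and your Atiyah--Hirzebruch sketch for $h^*$ and $KO$ actually goes further than the paper, which says nothing about the mechanism behind the hidden extensions and simply cites the references.
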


 We remind the reader that we use Hofer's definition of the cuplenght, which differs by one compared to that of \cite{IM04}.\par 

Since $\Sp(2)$ and $\Sp(3)$ are Lie groups, they are parallelisable. By Proposition \ref{Orientability Holds Often} we can therefore apply Theorem \ref{Intersection Points and Cuplength} with real $K$-theory to either one as the zero section lying inside its cotangent bundle. This gives a (strictly) stronger bound on the Arnol'd number than Hofer's cup length estimate, though this estimate was already known due to work of Laudenbach and Sikorav \cite{LS85}, using finite-dimensional approximations.

\begin{cor}\label{corollary}
    The minimum number of intersection points between a relatively exact Lagrangian embedding of $\Sp(2)$ (satisfying Proposition \ref{Orientability Holds Often}.\ref{sphere}) and its image under any Hamiltonian diffeomorphism is at least $4$. The same is true for $\Sp(3)$ with $5$ instead of $4$.
\end{cor}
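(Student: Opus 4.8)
The plan is to obtain Corollary \ref{corollary} immediately from Theorem \ref{Intersection Points and Cuplength} applied with the real $K$-theory spectrum $R = KO$, together with the cuplength computations in Proposition \ref{computations}. First I would check the hypotheses of Theorem \ref{Intersection Points and Cuplength}: by assumption $L$ is a relatively exact Lagrangian embedding of $\Sp(n)$ (with $n = 2$ or $3$) into a closed or Liouville symplectic manifold, so Assumption \ref{relativeexactness} holds, and the hypothesis that $L$ satisfies the condition of Proposition \ref{Orientability Holds Often}.\ref{sphere} is exactly what guarantees Assumption \ref{Assumption} for $R = KO$. Then Theorem \ref{Intersection Points and Cuplength} gives
$$\# L \cap L' \geq c_{KO}(L) = c_{KO}(\Sp(n)),$$
and Proposition \ref{computations} gives $c_{KO}(\Sp(2)) = 4$ and $c_{KO}(\Sp(3)) = 5$, which are the two asserted bounds.

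It is worth recording why these hypotheses are satisfied in the archetypal case of the zero section $\Sp(n) \hookrightarrow T^*\Sp(n)$. There $X = T^*\Sp(n)$ is Liouville, and the zero section is closed, connected, and relatively exact, since the inclusion is a homotopy equivalence and hence $\pi_2(X,L) = 0$. Moreover $\Sp(n)$, being a compact Lie group, is parallelisable, so $TL$ is trivial; choosing a connection on $T^*L \to L$ identifies $T(T^*L)$ with $\pi^*(TL \oplus T^*L)$, so $TX$ is trivial as well. One then picks trivialisations of $TL$ and of $TX$ compatibly to verify the skeleton condition of Proposition \ref{Orientability Holds Often}.\ref{sphere}.

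I do not expect any genuine obstacle here: the substance lies entirely in Theorem \ref{Intersection Points and Cuplength} and in the external computations of Proposition \ref{computations}. The only point requiring a short verification, in the zero-section case, is that parallelisability of $\Sp(n)$ together with the retraction $T^*L \simeq L$ really does supply \emph{compatible} stable trivialisations --- of $TL$ over a $3$-skeleton of $L$, and of the complexification of $TX$ over a $2$-skeleton of $X$ --- as demanded by Proposition \ref{Orientability Holds Often}.\ref{sphere}; this is routine since all the bundles involved are honestly trivial.
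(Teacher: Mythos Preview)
Your proposal is correct and matches the paper's approach exactly: the corollary is deduced directly from Theorem \ref{Intersection Points and Cuplength} with $R = KO$, the orientability criterion of Proposition \ref{Orientability Holds Often}.\ref{sphere}, and the cuplength values $c_{KO}(\Sp(2)) = 4$, $c_{KO}(\Sp(3)) = 5$ from Proposition \ref{computations}. Your additional remarks on the cotangent-bundle case are also in line with the paper's surrounding discussion.
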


\begin{prop}
    The critical number of $\Sp(2)$ is 4.
\end{prop}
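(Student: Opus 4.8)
The plan is to establish the two inequalities $\Crit(\Sp(2)) \geq 4$ and $\Crit(\Sp(2)) \leq 4$ separately, where $\Crit(\Sp(2))$ denotes the minimal number of critical points of a smooth map $\Sp(2) \to \bR$. The lower bound is immediate from what we have already assembled: by Proposition \ref{computations} the $KO$-cuplength satisfies $c_{KO}(\Sp(2)) = 4$, and the cuplength in any multiplicative generalised cohomology theory is a lower bound for the number of critical points of an arbitrary smooth function by \cite[Theorem 21]{Op14}. (If such a function had a non-isolated critical set the bound would be trivial, so one may assume finitely many critical points, in which case the usual sub-level-set argument applies over $KO^*$.) Hence $\Crit(\Sp(2)) \geq c_{KO}(\Sp(2)) = 4$.

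For the reverse inequality the task is to exhibit a single smooth function, and for this it suffices to produce a Morse function on $\Sp(2)$ with exactly $4$ critical points. The relevant structural input is that $\Sp(2)$ is a closed, simply-connected smooth manifold of dimension $n(2n+1) = 10$, and that its integral homology is torsion-free of total rank $4$, being dual to the exterior algebra $H^*(\Sp(2);\bZ) \cong \Lambda_{\bZ}(x_3,x_7)$. Since $\dim \Sp(2) = 10 \geq 6$ and $\Sp(2)$ is simply connected with torsion-free homology, Smale's handle-cancellation theory (the machinery underlying the $h$-cobordism theorem) yields a handle decomposition, and hence a Morse function, with exactly $b_i$ critical points of index $i$; the total is $\sum_i b_i(\Sp(2)) = 4$. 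This witnesses $\Crit(\Sp(2)) \leq 4$, and combining with the first step gives $\Crit(\Sp(2)) = 4$ — matching exactly the bound on $\#L\cap L'$ obtained in Corollary \ref{corollary}, as predicted by Conjecture \ref{c}.

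I expect the upper bound to be the only substantive point, the lower bound being a formal consequence of Proposition \ref{computations}. The delicate ingredient there is the appeal to high-dimensional handle theory, which needs $\dim \geq 6$ and simple-connectivity — both of which hold here, so no genuine obstacle arises. A more hands-on alternative would be to use that $\Sp(2)$ is the total space of an $S^3 = \Sp(1)$-bundle over $S^7$ and to glue perfect Morse functions on base and fibre, but for a non-trivial bundle this requires extra care to rule out spurious critical points, so I would favour the Smale argument.
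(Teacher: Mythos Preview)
Your proof is correct and follows essentially the same approach as the paper: the lower bound comes from the cuplength computation in Proposition~\ref{computations}, and the upper bound from Smale's theorem that a simply-connected closed manifold of dimension $\geq 6$ with torsion-free homology admits a Morse function with exactly $\sum_i b_i$ critical points (the paper cites this as \cite[Theorem 6.1]{Sm62}). The paper is terser, but the ingredients and logic are identical.
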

\begin{proof}
    The critical number of $\Sp(2)$ is bounded below by 4 by Proposition \ref{computations}. On the other hand, \cite[Theorem 6.1]{Sm62} combined with the computation of its integral homology in Lemma \ref{Z_2 cuplength of Sp(2)} implies that the Morse number of $\Sp(2)$ is $4$, which is an upper bound for the critical number.
\end{proof}

The bound for $\Sp(2)$ (and a stronger bound for $\Sp(3)$) were shown by \cite{LS85} for the respective zero section in the cotangent bundle. However, their methods are specifically geared towards cotangent bundles while our bounds persists under Weinstein handle attachments. As an example, one can plumb a copy of $T^*\Sp(2)$ containing $\Sp(2)$ as the zero section, with the cotangent bundle of any other $2$-connected manifold of the same dimension to obtain a new Weinstein manifold $X$. Since $\Sp(2)$ is $2$-connected, the resulting manifold admits the the same $2$-skeleton (up to homotopy) as $T^*\Sp(2)$, which is trivial. Therefore Proposition \ref{Orientability Holds Often}\ref{sphere} still holds for the embedding $\Sp(2)\hkra X$. This gives a stronger bound than Hofer's estimate, in a case for which the estimate of \cite{LS85} does not apply.\\  

We recap the computation of the integral cohomology rings of $\Sp(2)$ and $\Sp(3)$ here.

\begin{lem}[\cite{IM04}]\label{Z_2 cuplength of Sp(2)} The integral cuplengths of $\Sp(2)$ and $\Sp(3)$ are given by 
$$c_{\bZ}(\Sp(2)) = 3\quad\text{and}\quad c_{\bZ}(\Sp(3)) = 4.$$ Furthermore, $H^*(\Sp(2))$ and $H^*(\Sp(3))$ are free of rank 4 and 8 respectively, over both $\bZ$ and $\bZ/2$.
\end{lem}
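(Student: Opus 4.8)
The plan is to deduce everything from the classical computation of the integral cohomology ring of the compact symplectic groups, which is where \cite{IM04} also begins. First I would recall that $H^*(\Sp(n);\bZ)$ is torsion-free and, as a graded ring, is the exterior algebra $\Lambda_{\bZ}(x_3, x_7, \dots, x_{4n-1})$ on generators in the odd degrees $3, 7, \dots, 4n-1$ (see \cite{IM04} and the standard references on the cohomology of Lie groups cited there). Specialising to $n = 2$ gives $H^*(\Sp(2);\bZ) \cong \Lambda_{\bZ}(x_3, x_7)$, which is free abelian with basis $\{1, x_3, x_7, x_3 x_7\}$, hence of rank $4$; specialising to $n = 3$ gives $H^*(\Sp(3);\bZ) \cong \Lambda_{\bZ}(x_3, x_7, x_{11})$, free abelian on the eight square-free monomials, hence of rank $8$.

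For the $\bZ/2$ part of the statement I would invoke the universal coefficient theorem: since $H^*(\Sp(n);\bZ)$ is torsion-free, the natural map $H^*(\Sp(n);\bZ) \otimes_{\bZ} \bZ/2 \to H^*(\Sp(n);\bZ/2)$ is a ring isomorphism, so $H^*(\Sp(n);\bZ/2)$ is again free of rank $4$, respectively $8$, over $\bZ/2$, with the same exterior-algebra presentation.

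Finally I would read off the cuplengths, being careful that Hofer's convention adds one to the classical index of nilpotency (the maximal length of a nonzero product of reduced classes). In $\Lambda_{\bZ}(x_3, x_7)$ the reduced cohomology is spanned by $x_3, x_7, x_3 x_7$; the product $x_3 \cdot x_7$ is the nonzero top class (in degree $10 = \dim \Sp(2)$), while any product of three classes of positive degree has degree strictly above $10$ and hence vanishes, so $3$ is the least $k$ for which every $k$-fold product of reduced classes is zero, i.e. $c_{\bZ}(\Sp(2)) = 3$. The identical argument for $\Sp(3)$, with $x_3 x_7 x_{11}$ the nonzero top class in degree $21 = \dim \Sp(3)$ and every $4$-fold product of positive-degree classes vanishing, gives $c_{\bZ}(\Sp(3)) = 4$. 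The same reasoning over $\bZ/2$, via the identification above, reproduces $c_{\bZ/2}(\Sp(2)) = 3$ and $c_{\bZ/2}(\Sp(3)) = 4$, matching Proposition \ref{computations}.

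I do not anticipate a genuine obstacle: the entire content is the classical exterior-algebra structure of $H^*(\Sp(n);\bZ)$, and the only points requiring attention are bookkeeping --- tracking Hofer's off-by-one convention and confirming that the relevant top products are nonzero, which is immediate since the square-free monomials form a $\bZ$-basis.
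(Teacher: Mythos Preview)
Your proposal is correct and arrives at the same conclusion as the paper. The only difference is one of presentation: you cite the exterior-algebra description $H^*(\Sp(n);\bZ)\cong\Lambda_{\bZ}(x_3,\dots,x_{4n-1})$ as a known fact, whereas the paper derives it for $n=2,3$ via the Leray--Serre spectral sequence of the principal bundle $\Sp(n-1)\to\Sp(n)\to S^{4n-1}$, which collapses at $E_2$ for degree reasons and yields the tensor-product ring isomorphism $H^*(\Sp(n);A)\cong H^*(S^{4n-1};A)\otimes_A H^*(\Sp(n-1);A)$ for $A=\bZ$ or $\bZ/2$ directly (so no separate appeal to universal coefficients is needed). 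Either way the cuplength is then read off exactly as you describe.
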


\begin{proof} Given $n$ we can identify $\bH^n$ with $\bR^{4n}$ to see the existence of a principal $\Sp(n-1)$-bundle $\Sp(n)\to S^{4n-1}$ induced by the canonical action on the unit quaternions. Thus we can apply the Leray-Serre spectral sequence to compute the cohomology of $\Sp(2)$ with coefficients in $A = \bZ$ or $A = \bZ/2$. The $E_2$-page is given by 
$$E^{p,q}_2 = H^p(S^7,H^q(\Sp(1);A))$$
which vanishes for $p \notin\{0,7\}$ and $q \notin\{0, 3\}$. Hence the spectral sequence collapses for degree reasons at the second page. As $\Sp(1) \cong S^3$, we obtain
\begin{equation}\label{iso-cohom}H^*(\Sp(2); A) \cong H^*(S^7; A) \otimes_A H^*(S^3; A).\end{equation}
By the multiplicativity of the spectral sequence, \eqref{iso-cohom} is an isomorphism of graded rings. Therefore,
$$H^n(\Sp(2);A) = \begin{cases}
A\quad & n \in \{0,3,7,10\},\\
0 \quad &\normalfont\text{otherwise}
\end{cases}$$
and we can deduce the values of $c_{\bZ/2}(\Sp(2))$ and $c_\bZ(\Sp(2))$.\par 
The same argument gives an isomorphism of graded rings 
$$H^*(\Sp(3);A) \cong H^*(S^{11};A) \otimes_A H^*(\Sp(2);A).$$
Therefore,
$$H^n(\Sp(3);A) = \begin{cases}
A\quad & n \in \{0,3,7,10, 11, 14, 18, 21\},\\
0 \quad &\normalfont\text{otherwise}
\end{cases}$$
from which we deduce the corresponding statements for $\Sp(3)$.
\end{proof}

\bibliographystyle{alpha}
\bibliography{bibglch}

\Addresses

\end{document}